\font\sc=rsfs10 at 12pt
\renewcommand{\a}{\alpha}
\newcommand{\g}{\gamma}
\newcommand{\G}{\Gamma}
\renewcommand{\d}{\delta}
\newcommand{\D}{\Delta}
\newcommand{\e}{\epsilon}
\newcommand{\ve}{\varepsilon}
\newcommand{\z}{\zeta}
\newcommand{\vt}{\vartheta}
\renewcommand{\i}{\iota}
\renewcommand{\k}{\kappa}
\renewcommand{\l}{\lambda}
\newcommand{\m}{\mu}
\newcommand{\n}{\nu}
\newcommand{\x}{\xi}
\renewcommand{\r}{\rho}
\newcommand{\s}{\sigma}
\newcommand{\Si}{\Sigma}
\newcommand{\vs}{\varsigma}
\renewcommand{\t}{\tau}
\newcommand{\h}{\chi}
\newcommand{\vp}{\varpi}
\renewcommand{\o}{\omega}
\renewcommand{\O}{\Omega}
\newcommand{\C}{{\mathbb C}}
\newcommand{\R}{{\mathbb R}}
\newcommand{\bm}{\pmb{\m}}
\newcommand{\ab}{{\mathbf a}}
\newcommand{\eb}{{\mathbf e}}
\newcommand{\kb}{{\mathbf k}}
\newcommand{\pb}{{\mathbf p}}
\newcommand{\qb}{{\mathbf q}}
\newcommand{\rb}{{\mathbf r}}
\newcommand{\vb}{{\mathbf v}}
\newcommand{\Ab}{{\mathbf A}}
\newcommand{\Bb}{{\mathbf B}}
\newcommand{\Cb}{{\mathbf C}}
\newcommand{\Fb}{{\mathbf F}}
\newcommand{\Hb}{{\mathbf H}}
\newcommand{\Ib}{{\mathbf I}}
\newcommand{\Jb}{{\mathbf J}}
\newcommand{\Nb}{{\mathbf N}}
\newcommand{\Qb}{{\mathbf Q}}
\newcommand{\Vb}{{\mathbf V}}
\newcommand{\Xb}{{\mathbf X}}
\newcommand{\Zb}{{\mathbf Z}}
\newcommand{\km}{\mathbbm{k}}
\newcommand{\aF}{\mathfrak a}
\newcommand{\AF}{\mathfrak A}
\newcommand{\bF}{\mathfrak b}
\newcommand{\BF}{\mathfrak B}
\newcommand{\cF}{\mathfrak c}
\newcommand{\CF}{\mathfrak C}
\newcommand{\dF}{\mathfrak d}
\newcommand{\kF}{\mathfrak k}
\newcommand{\KF}{\mathfrak K}
\newcommand{\LF}{\mathfrak L}
\newcommand{\mF}{\mathfrak m}
\newcommand{\MF}{\mathfrak M}
\newcommand{\qF}{\mathfrak q}
\newcommand{\QF}{\mathfrak Q}
\newcommand{\rF}{\mathfrak r}
\newcommand{\RF}{\mathfrak R}
\newcommand{\TF}{\mathfrak T}
\newcommand{\WF}{\mathfrak W}
\newcommand{\VF}{\mathfrak V}
\newcommand{\Ht}{\mathrm{H}}
\newcommand{\Ac}{{\mathcal A}}
\newcommand{\Bc}{{\mathcal B}}
\newcommand{\Dc}{{\mathcal D}}
\newcommand{\Ec}{{\mathcal E}}
\newcommand{\Fc}{{\mathcal F}}
\newcommand{\Hc}{{\mathcal H}}
\newcommand{\Jc}{{\mathcal J}}
\newcommand{\Lc}{{\mathcal L}}
\newcommand{\Nc}{{\mathcal N}}
\newcommand{\Oc}{{\mathcal O}}
\newcommand{\Rc}{{\mathcal R}}
\newcommand{\Tc}{{\mathcal T}}
\newcommand{\Uc}{{\mathcal U}}
\newcommand{\Hs}{\sc\mbox{H}\hspace{1.0pt}}
\newcommand{\diag}{{\rm diag}\,}
\newcommand{\vol}{{\rm vol}\,}
\newcommand{\sub}{\footnotesize{\rm{sub}}\,}
\newcommand{\Q}{\QF}
\newcommand{\Hom}{\operatorname{Hom\,}}
\newcommand{\End}{\operatorname{End\,}}
\newcommand{\meas}{\operatorname{meas\,}}
\newcommand{\sme}{\sigma_{\operatorname{ess}}}
\renewcommand{\km}{\mathbbm{k}}
\newcommand{\bn}{\pmb{\nu}}
\newcommand{\beq}{\begin{equation}}
\newcommand{\eeq}{\end{equation}}
\numberwithin{equation}{section}
\numberwithin{figure}{section}
\newtheorem{thm}{Theorem}[section]
\newtheorem{lem}[thm]{Lemma}
\newtheorem{proposition}[thm]{Proposition}
\newtheorem{theorem}{Theorem}[section]
\newtheorem{Proposition}[theorem]{Proposition}
\theoremstyle{remark}
\newtheorem{remark}[theorem]{Remark}
\theoremstyle{definition}
\begin{document}

\title[Zero order operators]{Discrete spectrum of zero order pseudodifferential operators}

\author{Grigori Rozenblum }

\address{Chalmers Univ. of Technol., Sweden; The Euler Intern. Math. Institute and St.Petersburg State Univ.; Mathematics Center
Sirius Univ. of Sci. and Technol.
Sochi Russia}
\email{grigori@chalmers.se}

\subjclass[2010]{47A75 (primary), 58J50 (secondary)}
\keywords{Pseudodifferential Operators, Eigenvalue Asymptotics}
%\dedicatory{}
\thanks{The author was supported  by the grant of the Russian Fund of Basic Research 20-01-00451.}

\begin{abstract}
We study the rate of convergence of eigenvalues to the endpoints of essential spectrum for zero order pseudodifferential operators on a compact manifold.
\end{abstract}
\maketitle

%\noindent{\footnotesize {\bf Key words}. Neumann-Poincar\'e operator, Eigenvalues, Weyl's law, Pseudo-differential operators, Willmore energy}

%\tableofcontents

%%%%%%%%%%%%%%%%%%%%%%%%%%%%
\section{Introduction}\label{Intro}
 \subsection{Zero order operators and their spectrum}\label{SubSe.zero}Spectral properties of self-adjoint pseudodifferential operators on a compact manifold have been the topic of intensive study at least for the latest 70 years. For elliptic operators of positive order, thus having discrete spectrum, Weyl type asymptotics is known since long ago, with the second term existing under some geometric conditions. For operators of negative order, the compact ones, the eigenvalue asymptotics formula was established by M.Birman and M.Solomyak, see \cite{BS}, a more elementary proof, under certain regularity conditions was presented in \cite{Ponge}; a remainder estimate and, again, the second term in asymptotics, was investigated in \cite{Ivrii}, Sect. 11.8. Considerably less is known for pseudodifferential operators of zero order.  The  location of the essential spectrum $\sme(\AF)$ of such operator $\AF$ acting on a closed smooth manifold $\Xb$ is determined by the principal (order zero) symbol $\aF_0(x,\o)$, $\o=\x/|\x|,$ of $\AF.$ Namely, for operators acting on functions, $\sme(\AF)$ coincides with the set of values $\Rc(\aF_0)$ of the symbol $\aF_0$:
\begin{equation*}
    \sme(\AF)=\Rc(\aF_0)\equiv \bigcup_{(x,\o)\in S^*X}\{\aF_0(x,\o)\}.
\end{equation*}
For operators acting on sections of a vector bundle over $\Xb,$ the essential spectrum of $\AF$ coincides with the union of the spectra of the principal symbol $\aF_0(x,\o)$:
\begin{equation*}
    \sme(\AF)=\bigcup_{(x,\o)\in S^*\Xb}\s(\aF_0(x,\o))=\bigcup_{(x,\o)\in S^*\Xb}\bigcup_{\i}\{\bm_\i(x,\o)\},
\end{equation*}
where $\bm_\i(x,\o)$ are eigenvalues of the symbol $\aF_0(x,\o)$ (this simple but important fact was established in \cite{Adams} in the scalar case and in \cite{RPc} in the vector case). Recently, some applications required an analysis of the essential spectrum of such operators, see \cite{Zwor}, \cite{CdV1}, \cite{GalkoZwor}.

Less is known about the discrete spectrum of zero order operators. Examples in \cite{Tao} show that embedded eigenvalues can be present, even in  the one-dimensional case. As for non-embedded eigenvalues, they, of course, may converge only to the tips of the essential spectrum and a natural question arises about their rate of convergence to these tips. This kind of questions arizes, in particular, in the study of the Neumann-Poincare (NP, the double layer potential) operator $\KF$ for the three-dimensional elasticity. It is known that this operator is not compact, even for infinitely smooth data; it was found in \cite{AgrLame} that it is a zero order symmetrizable  pseudodifferential operator acting on smooth sections of a trivial three-dimensional vector bundle over the boundary $\Xb$ of a nice domain $\Dc$ in $\R^3.$ The principal symbol of this operator, a $3\times 3$ matrix, was found in  \cite{AgrLame} (see also \cite{3D}, \cite{MiRo}); it depends only on the Lam\'e constants of the material of the body but not on its geometry.  Therefore,
if the material is homogeneous, the principal symbol of the NP operator   has constant eigenvalues not depending on $(x,\o)\in S^*\Xb$ and therefore there are only finitely many (in fact, exactly 3) points of the essential spectrum of this operator. This means that the operator $\KF$ is  polynomially compact. Such kind of operators has been studied in \cite{RPc}, and it was found that the eigenvalues of a polynomially compact pseudodifferential operator converge to the points of the essential spectrum power-like. The rate of this convergence depends on the subprincipal symbol and,  possibly, in the degenerate case, on the some lower order symbols of the operator $\KF;$ the coefficients in the eigenvalue asymptotic formulas depend on the Lam\'e constants and geometrical characteristics of the surface $\Xb,$ see \cite{RPc}, \cite{R.NP.}.

For a nonhomogeneous material, the eigenvalues of the principal symbol of the NP operator are, generally, nonconstant and the essential spectrum may consist of several intervals in the real line and the isolated point $0$; for a body $\Dc$ with connected boundary $\Xb$ there are two such intervals, symmetrical with respect to the zero point.
The principal symbol of the NT operator equals
 \begin{equation}\label{princ}
 \kF_0(x,\o)=\km(x) \rF(\o)\equiv\frac{i}{\km(x)}\begin{pmatrix}
                                              0 & 0 & -\x_1 \\
                                              0 & 0 & -\x_2 \\
                                              \x_1 & \x_2 & 0 \\
                                            \end{pmatrix}
\end{equation}
in a special local co-ordinate system. The coefficient $\km(x)$ is determined by the Lame constants $\l(x),\m(x)$ at the boundary point $x\in \Xb,$ $\km(x)=\frac{\m(x)}{2(2\m(x)+\l(x))}$ while the matrix $\rF(\o)$,
$\o=\frac{\x}{|\x|}\in S_x^*(X)$ has eigenvalues $-1,0,1.$ Thus the essential spectrum of the NP operator consists of the point zero and the range of the functions $x\mapsto \km(x),$ $x\mapsto -\km(x),$ $x\in \Xb.$ If  $\km(x)$ is not constant on the surface $\Xb,$ these intervals are nontrivial; for a connected boundary they have the form  $\Jb_-=[-\km_+,-\km_-],$ $\Jb_{+}=[\km_-,\km_+],$ where $\km_-=\min_{x\in \Xb}\km(x),$ $\km_+=\max_{x\in \Xb}\km(x).$

In \cite{MiRo}, the case of one of these extremal points, say, $\km_-$, being a nondegenerate extremum of $\km(x)$ was considered, and certain estimates for the rate of convergence of eigenvalues of $\KF$, lying below $\km_-,$ to this point have been obtained. Asymptotic formulas for these eigenvalues have not been derived in \cite{MiRo}. What could be seen from the results of \cite{MiRo} is that this rate of convergence is different from the one for the case of a homogeneous material; in fact, they converge faster.

In the present paper we consider, in a more general setting, the question of the asymptotics of eigenvalues of a self-adjoint zero order pseudodifferential operator as they converge to an extremal value of an eigenvalue branch of the principal symbol.  We understand  that the behavior of these eigenvalues should depend on the structure of this extremal point. In this paper we  consider the case of a nondegenerate extremal value and a special case of a degenerate one, generalizing \eqref{princ}. The rate of convergence of eigenvalues depends, in fact, on the structure of the extremum. Some other cases of the structure of the extremal point will be considered in  later publications.

Our approach is based upon a reduction of the above spectral problem to the study of the negative eigenvalue  asymptotics of Schr\"odinger-like operators with negative potential tending to zero at infinity. This kind of problems has been considered since quite long ago, probably, starting from the papers \cite{BrCl}, \cite{R.Schr}, and \cite{Tamura77}, and further on, until very general results, in the pseudodifferential setting,  obtained by V.Ivrii, \cite{Ivrii0}, Sect.10.5. The formulas for eigenvalue asymptotics for this kind of problems have Weyl form, i.e., are expressed in the terms of phase space volume. However it is possible here that the region of the phase space where the symbol of the operator is negative, has finite volume, and in this case there are only finitely many negative eigenvalues, due to CLR-type estimates. This circumstance imposes the condition of a sufficiently slow decay of the absolute value of the potential, required for the validity of asymptotic formulas. As applied to pseudodifferential operators under consideration, this slow decay condition translates into the one of nonvanishing of the subprincipal symbol at the critical point.

\subsection{Setting 1. The pseudodifferential operator} \label{setting1}
Let $\Xb$ be a smooth compact boundary-less manifold of dimension $d$ (equipped with Riemannian metric.) Let $\Ec$ be an $\Nb-$
 dimensional Hermitian vector bundle over $\Xb.$ We consider a zero order pseudodifferential operator $\AF$ acting in
  the space of smooth sections of $\Ec$. We suppose that $\AF$ is selfadjoint in $L_2(\Ec)$ with respect to the Riemannian
   measure on $\Xb$ and the fixed Hermitian structure on $\Ec.$

In a fixed local co-ordinate system in a neighborhood $U$ in $\Xb$ and a fixed local frame in $\Ec$ over $U$, operator $\AF$ is defined by
\begin{equation}\label{Psdo}
    (\AF u)(x) =\Fc^{-1}_{\x\to x}\aF(x,\x)\Fc_{x'\to\x}u(x')+(\Oc u)(x)
\end{equation}
for a section $u$ of $\Ec$ supported in $U$. Here $\Fc$ is the Fourier transform, $\Fc^{-1}$ is the inverse Fourier transform, $\aF(x,\x)$ is the symbol of the operator $\AF$ in this local representation, a smooth section of  $\Hom(\Ec)$ and $\Oc$ is an infinitely smoothing operator.  The operator $\AF$ is supposed to be a classical zero order pseudodifferential operator; this means that in a fixed (and, therefore, any) local representation the symbol $\aF(x,\x)$ expands in an asymptotic series in homogeneous functions,
\begin{equation}\label{expan}
    \aF(x,\x)\sim \sum_{\n=0}^{\infty}\aF_{-\n}(x,\x), \, \aF_{-\n}(x,\t \x)=\t^{-\n}\aF_{-\n}(x, \x),\, \t>0.
\end{equation}
Of course, the symbol $\aF(x,\x)$ and its homogeneous components depend on the choice of local co-ordinates and the local frame. However the principal symbol $\aF_0(x,\x)$ and the subprincipal symbol $\aF_{\sub}(x,\x)=\aF_{-1}(x,\x)+\frac{1}{2i}\sum_j\partial_{\x_j}\partial_{x_j}\aF_0(x,\x)$ are invariant in the usual sense, see, e.g., \cite{Shubin}. (We, in fact, do not need to go into details of this invariance since our main considerations take place in  fixed local coordinates and a fixed frame.)

Consider the principal symbol $\aF_0(x,\x)$ of $\AF,$  for $(x,\x)\in \dot{T}\Xb$, the cotangent bundle of $\Xb$ with the zero section removed. It is a smooth function with values in $\Hom\Ec_x,$ zero order positively homogeneous in $\x$ variable. The self-adjointness condition implies  that $\aF_0(x,\x)$ is symmetric with respect to the Hermitian structure of the bundle $\Ec.$ Thus, $\aF_0(x,\x)$ has $\Nb$ real eigenvalues $\bm_\i(x,\x), \, \i=1,\dots,\Nb$ counting multiplicity, for any $(x,\x)\in \dot{T}\Xb.$ Ordered in the non-decreasing way,
$\bm_1(x,\x)\ge \bm_2(x,\x)\ge\dots\ge \bm_\Nb(x,\x),$ these eigenvalues are zero order homogeneous in $
\x$ and are continuous functions of $(x,\x)\in\dot{T}\Xb. $ Moreover, on a connected open  set $\Uc\subset  \dot{T}\Xb$ where a certain $\bm_\i(x,\x)$ has constant multiplicity, this eigenvalue is a smooth function of $(x,\x)\in\Uc.$ On such a set,  the corresponding eigenvectors $\eb_\i(x,\x)$ of $\aF_0(x,\x)$ can be locally chosen depending on $(x,\x)\in\Uc$ in a smooth way as well. This happens, in particular, if the eigenvalue $\bm_\i(x,\x)$ is simple.

More generally, let for $(x,\x)\in\Uc$ the eigenvalues $\bm_\i(x,\x)$ can be split into two disjoint groups, $\Ib_{\Nb}\equiv[1,\Nb]=\Ib\cup\Ib',$ so that $\bm_\i(x,\x)\ne \bm_{\i'}(x,\x), \, \i\in\Ib,\, \i'\in\Ib'.$ Denote by $\pi_\Ib(x,\x)$ the spectral projection of the homomorphism $\aF_0(x,\x)$ corresponding to the spectral set $\bm_\i, \i\in \Ib$, with similarly defined $\pi_{\Ib'}(x,\x)$. Under these conditions, the projections $\pi_\Ib(x,\x), $ $\pi_{\Ib'}(x,\x)$ depend smoothly on $(x,\x)\in \Uc.$ We are interested especially in the case when the set $\Ib$ consists of just one element.

\subsection{Setting 2. The  extremal point}
We give here the description of the structure of a tip point of the essential spectrum of  $\AF.$
We suppose that the principal symbol $\aF_0(x,\x)$ with eigenvalues $\bm_\i(x,\x)$ possesses the following properties. There exists a point $x^0\in \Xb$ such that $\bm_1(x^0,\x)=1$ for $\x\in \dot{T}_{x^0}\Xb,$ and  $\bm_1(x,\x)$ has a nondegenerate maximum at the point $x=x^0$.  Consider local co-ordinates on $\Xb$ near the point $x^0$ so that $x^0=0.$ Thus, near this point  the function
  $\bm_1(x,\x)$ has the form
\begin{equation}\label{m_1}
\bm_1(x,\x)=1 -\frac12\sum_{j,k}x_jx_k \zeta_{j,k}(\x) +O(|x|^3)\equiv 1-\Qb_{\x}(x)+O(|x|^3),
\end{equation}
where $\zeta_{j,k}(\x)=\partial_{x_j}\partial_{x_k}\bm_1(x,\x)|_{x=0}.$
By our nondegeneration assumption,  the quadratic form  $\Qb_{\x}(x)$ in \eqref{m_1} is positive definite for all $\x$ -- by compactness, it is uniformly positive definite for $\x\in \dot{T_{x^0}}\Xb$,
\begin{equation}
\sum_{j,k}x_jx_k \zeta_{j,k}(\x)\ge \g_0|x|^2, \, \g_0>0.
\end{equation}

We suppose further that $\bm_1(0,\x)=1$ is the global maximal value of $\bm_1(x,\x)$ for $(x,\x)\in \dot{T}\Xb$: there exists $\d>0, \rb>0,$ such that $\bm_1(x,\x)<1-\rb$ for all $x\in \Xb$ outside the $\d$ -neighborhood $U_{\d}$ of  $x_0$ and other eigenvalues $\bm_\i(x,\x),$ $\i>1,$ are strictly less than $1,$ $\bm_\i\le 1-\rb$ for all $x\in \Xb$, while $\bm_1(x,\x)<1$ for $x\in U_{\d}\setminus x^0$.

It follows that $1$ is the highest  point of the essential spectrum of $\AF.$  We are interested in finding the asymptotics of the   eigenvalues of $\AF$ as they approach $1,$ from above, of course.  We denote by $\l_j(\AF)$ these eigenvalues, $\l_1\ge \l_2\ge\dots>1$ and by $n(1+t)$ the counting function of these eigenvalues, $n(1+t)=\sum_{\l_j(\AF)>1+t}1.$

At some moment in our considerations, it will be convenient to pass from the operator $\AF$ to $\pmb{\AF}=1-\AF;$ for the new operator, it is the point zero will be the lowest tip of the essential spectrum, the smallest eigenvalue of the principal symbol $1-\aF_0$ will have a nondegenerate minimum at the point $x=0$ and the object of the study becomes the distribution of the negative eigenvalues of $1-\AF$ as they approach zero from below.

The most simple case is the scalar one.
Here $\aF(x,\x)$ is a scalar function and the above conditions are satisfied with $\bm_1(x,\x)$ replacing $\aF(x,\x)$

The first main theorem, concerning the scalar case, is the following.

\begin{thm}\label{Th.Scalar}Let the dimension of the bundle $\Ec$ equal $1$. Let the (only) eigenvalue of the principal symbol $\aF_0(x,\x)$ of the zero order pseudodifferential operator $\AF$ satisfy condition \eqref{m_1}. Then the eigenvalues of the operator $\AF$ converging to $1$ satisfy the asymptotic formula
\begin{equation}\label{Asympt.Scalar.Intro}
    n(\AF,1+t)\sim C_1(\AF)t^{-\frac{d}{2}},
\end{equation}
where the coefficient $C_1(\AF)$, see \eqref{Coef.1D.3}, is expressed in terms of the quadratic form $\Qb_{\x}(x)$ in \eqref{m_1} and the value subprincipal symbol of the operator $\AF$ at the point $x=x^0.$
\end{thm}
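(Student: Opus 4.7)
The plan is to follow the strategy sketched in the introduction: reduce the problem to the semiclassical analysis of a Schr\"odinger-like operator with a negative potential that decays slowly at infinity. First, I would pass to $\pmb{\AF}=1-\AF$, so that the eigenvalues of $\AF$ above $1$ become negative eigenvalues of $\pmb{\AF}$ accumulating at $0$, the lowest point of $\sme(\pmb{\AF})$. The principal symbol $1-\aF_0(x,\xi)$ is non-negative and vanishes only at $x=x^0=0$, while outside the neighbourhood $U_\d$ it is bounded below by $\rb>0$; an IMS-type partition-of-unity localisation then shows that only the part of $\pmb{\AF}$ supported near $x^0$ can contribute an infinite sequence of negative eigenvalues, with the complement yielding at most $O(1)$ such eigenvalues and hence not affecting the asymptotic leading term.

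In Riemann normal coordinates centred at $x^0$, using \eqref{m_1} the symbol of $\pmb{\AF}$ reads
\[
\sigma(\pmb{\AF})(x,\xi)=\Qb_\xi(x)-\aF_{\sub}(x,\xi)+R(x,\xi),
\]
with $\Qb_\xi(x)$ the uniformly positive definite quadratic form (degree $0$ in $\xi$), $\aF_{\sub}$ of degree $-1$ in $\xi$, and a remainder $R$ collecting the $O(|x|^3)$ correction to the principal symbol together with the $O(|\xi|^{-2})$ lower-order homogeneous terms. The phase-space set $\{\sigma(\pmb{\AF})<-t\}$ is a thin slab of width $|x|\lesssim|\xi|^{-1/2}$ extending out to $|\xi|\sim t^{-1}$, and it is non-empty only for directions $\omega\in S^{d-1}$ on which $\aF_{\sub}(0,\omega)>0$. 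This sign condition on the subprincipal symbol at the critical point is the exact analogue of the slow-decay-at-infinity hypothesis in the Schr\"odinger reduction, and plays the role of the CLR-type nondegeneracy needed for eigenvalue accumulation; without it only finitely many eigenvalues of $\AF$ would exceed $1$.

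I would then apply a Birman--Schwinger reduction to recast $n(\AF,1+t)$ as the counting function of a compact operator-valued family, placing the problem into the framework of \cite{Ivrii0}, Sect.~10.5 (and the earlier works \cite{BrCl,R.Schr,Tamura77}). This yields the Weyl-type asymptotics
\[
n(\AF,1+t)=(2\pi)^{-d}\meas\{(x,\xi)\in T^*\Xb\,:\,\sigma(\pmb{\AF})(x,\xi)<-t\}+o(t^{-d/2}).
\]
Evaluating the volume in polar coordinates $\xi=r\omega$, and exploiting $\Qb_{r\omega}(x)=\Qb_\omega(x)$ together with $\aF_{\sub}(0,r\omega)=r^{-1}\aF_{\sub}(0,\omega)$, the defining inequality $\Qb_\omega(x)<r^{-1}\aF_{\sub}(0,\omega)-t$ is nontrivial precisely when $\aF_{\sub}(0,\omega)>0$ and $r<\aF_{\sub}(0,\omega)/t$; the substitution $\tau=tr/\aF_{\sub}(0,\omega)$ reduces the $(r,x)$-integration to a single Beta integral, yielding the $t^{-d/2}$ scaling with
\[
C_1(\AF)=\frac{B(d/2,\,d/2+1)}{(2\pi)^d}\int_{S^{d-1}} V(\omega)\bigl(\aF_{\sub}(0,\omega)\bigr)_+^{d}\,d\omega,
\]
where $V(\omega)=\meas\{y\in\R^d\,:\,\Qb_\omega(y)<1\}$; I would expect this to match \eqref{Coef.1D.3} up to trivial rewriting.

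The main obstacle is the quantitative control of the remainder $R$ during the Birman--Schwinger step. On the relevant slab one has $|x|^3\lesssim|\xi|^{-3/2}$ and the lower-order homogeneous contributions are of order $|\xi|^{-2}$, both genuinely subdominant with respect to the driving term $\aF_{\sub}\sim|\xi|^{-1}$; to turn this heuristic into a rigorous identity I would sandwich $\pmb{\AF}$ between two auxiliary operators whose symbols differ from $\Qb_\xi(x)-\aF_{\sub}(x,\xi)$ only by terms absorbable into the $o(t^{-d/2})$ error in the Ivrii asymptotics, and close the argument by monotonicity of the counting function. The uniform positive-definiteness of $\Qb_\xi(x)$ is essential here, since it is precisely what guarantees the slab geometry on which these remainder estimates hold.
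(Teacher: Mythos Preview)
Your overall strategy---pass to $1-\AF$, localize near $x^0$, reduce to a model operator, and read off the Weyl-type phase-space volume---is the same as the paper's, and your computation of the coefficient via the Beta integral matches \eqref{Coef.1D.3}. But one essential step in the paper's argument is absent from your proposal, and without it the invocation of the Schr\"odinger literature is not justified.

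After localization and the $(1\pm\varepsilon)$ sandwich of the principal symbol by $(1\pm\varepsilon)\Qb_\xi(x)$, the paper \emph{conjugates by the Fourier transform} $\Fc$ on $\R^d$. This swaps the roles of $x$ and $\xi$: the operator $(1\pm\varepsilon)\Qb_\xi(x)$, which is quadratic in $x$ with coefficients homogeneous of degree~$0$ in $\xi$, becomes a genuine second-order elliptic operator in~$D$ with coefficients homogeneous of degree~$0$ in~$x$; simultaneously the frozen subsymbol $\bF(0,\xi)$, homogeneous of degree~$-1$ in~$\xi$, becomes a multiplication operator by a potential decaying like $|x|^{-1}$ at spatial infinity. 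Only after this step is one in the setting of \cite{Levendorsky} (or of \cite{Ivrii0}, Sect.~10.5, \cite{BrCl}, \cite{R.Schr}, \cite{Tamura77}), all of which treat operators that are elliptic of positive order in~$\xi$ and whose ``potential'' decays in~$x$. Your model operator $\Qb_\xi(x)-\aF_{\sub}(0,\xi)$ has neither property: it is order~$0$ in~$\xi$ and the term playing the role of the potential decays in~$\xi$, not in~$x$. So the Birman--Schwinger/Ivrii step you describe does not apply as stated; the Fourier conjugation is precisely what repairs this. The phase-space volume you compute is of course invariant under the swap, which is why your final answer is correct, but the analytic machinery is not.

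A second point: the freezing of the subprincipal symbol at $x^0$ is not as innocent as your last paragraph suggests. The paper isolates this as a separate proposition (Proposition~\ref{Prop.Est}) and proves it by a variational argument combined with singular-value estimates for weighted integral operators from \cite{BS.Int}; the conclusion is that for any $\varepsilon>0$ the operator $\varepsilon\QF-\CF$ (with $\CF$ the order $-1$ piece vanishing at $x^0$) has $n(-t,\cdot)=o(t^{-\gamma})$ for \emph{every} $\gamma>0$, which is what one feeds into the perturbation Lemma~\ref{Lem.perturb}. Your sandwich idea is in the right direction, but the actual mechanism is this eigenvalue estimate rather than an absorption into an Ivrii remainder.
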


In the vector case the formulation of the theorem on eigenvalue asymptotics is more complicated, and it involves an additional condition of topological character (in fact, this condition is restrictive only in the dimension $d=3$).
The treatment of the general case, $\Nb>1$, will be based upon a partial diagonalization of the operator $\AF.$ This topic was recently under  investigation in the paper \cite{CRSV}. We present the result we need.

Consider the restriction of the vector bundle $\Ec$ to a neighborhood of the point $x^0=0$, where the condition \eqref{m_1} is satisfied for the eigenvalue $\bm_1(x,\x)$. We may suppose that this restriction is a trivial bundle. For the point in $\Xb,$ namely, for $x=x^0$, the  (one-dimensional) spectral subspace of the principal symbol $\aF_0(x^0,\x) $ corresponding to the eigenvalue $\bm_1(x^0,\x),$ $\x\in S^{d-1}=S^*_{x^0}\Xb,$ composes   a one-dimensional complex projective bundle $\Bc_1$ over the sphere $S^{d-1},$ which can be considered as a smooth mapping

\begin{equation}\label{bundle}
 \pi_1: S^{d-1}\to \operatorname{CP}^{\Nb-1},
\end{equation}
while the span of other eigenspaces of $\aF(\x)$ forms an $\Nb-1$- dimensional projective bundle $\Bc^{\bot}.$ We need to find a smooth global branch of the eigenvector $\eb_1(\x),$ $\x\in S^{d-1};$ this means, the section  of the projective bundle $\Ec_1$. There is a topological obstacle. Namely, such a branch exists if and only if one of the following conditions is satisfied:

\begin{itemize}\item \emph{Condition A}: $d\ne 3$; or
\item \emph{Condition B}: $d=3 $ and the Euler class $e(\Bc_1)\in H^2(\operatorname{CP}^{\Nb-1})$  pulls back to zero under the mapping \eqref{bundle}.
\end{itemize}

\begin{thm}\label{Thm.Vector} Suppose that $\AF$ is a zero pseudodifferential operator with symbol satisfying \eqref{m_1}. Suppose that \emph{Condition A} or \emph{Condition B} is satisfied. Then for the eigenvalues of $\AF$ converging to the point $1$ from above the asymptotic formula 
\begin{equation}\label{Asympt.Vector.Intro}
    n(\AF,1+t)\sim C(\AF)t^{-\frac{d}{2}},
\end{equation}
holds, where the coefficient $C(\AF)$ is expressed via the quadratic form $\Qb_{\x}(x)$ in \eqref{m_1}, the eigenvector corresponding to $\bm(x^0,\x),$ and the value of the subprincipal  symbol $\aF_{-1}$ at $x=x^0.$
\end{thm}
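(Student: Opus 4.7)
My plan is to reduce the vector-valued problem to Theorem \ref{Th.Scalar} via a partial microlocal diagonalization of $\AF$ near the critical point $x^0$, in the spirit of \cite{CRSV}. Under the separation hypotheses the eigenvalue $\bm_1(x,\xi)$ is simple and smoothly isolated from the rest of the spectrum of $\aF_0(x,\xi)$ on a conic neighbourhood $\Vc$ of $\dot{T}_{x^0}\Xb$, so the rank-one spectral projection $P(x,\xi)$ onto the $\bm_1$-eigenspace is a smooth zero-homogeneous symbol on $\Vc$. To turn the projected operator into a scalar one I need a smooth unit section $\eb_1(x,\xi)$ of the range of $P$, which is where Condition A/B enters. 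For $d\ne 3$ one has $H^2(S^{d-1};\Z)=0$, so every complex line bundle over $S^{d-1}$ is trivial and a smooth unit section exists automatically; for $d=3$ the line bundle pulled back along \eqref{bundle} is classified by the pulled-back Euler class, which is zero by Condition B. In either case I fix such a smooth $\eb_1(\xi)$ on $S^*_{x^0}\Xb$ and propagate it to a smooth unit section $\eb_1(x,\xi)$ of $P$ over $\Vc$ by setting $\eb_1(x,\xi):=P(x,\xi)\eb_1(\xi)/\|P(x,\xi)\eb_1(\xi)\|$, which is valid near $x=x^0$.

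Applying the machinery of \cite{CRSV}, I construct a zero-order pseudodifferential operator $\Wc$ on $\Xb$, unitary modulo smoothing, whose principal symbol sends the first element of a local frame of $\Ec$ to $\eb_1(x,\xi)$ and the remaining elements into the orthogonal complement of the $\bm_1$-eigenspace. Microlocally in $\Vc$,
\begin{equation*}
\Wc^{*}\AF\Wc \;=\; \AF_1 \oplus \AF' \pmod{\text{smoothing}},
\end{equation*}
where $\AF_1$ is a scalar self-adjoint zero-order pseudodifferential operator with principal symbol $\bm_1(x,\xi)$ and $\AF'$ is a self-adjoint $(\Nb-1)\times(\Nb-1)$ zero-order operator whose principal-symbol eigenvalues are $\bm_\iota$ for $\iota\ge 2$. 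By the global spectral gap the principal symbol of $\AF'$ stays below $1-\rb$ everywhere, and outside $U_\d$ all eigenvalues of $\aF_0$ are below $1-\rb$ uniformly; a min-max argument therefore gives
\begin{equation*}
n(\AF,1+t) = n(\AF_1,1+t) + O(1), \qquad t\downarrow 0.
\end{equation*}
Since $\bm_1$ satisfies \eqref{m_1}, Theorem \ref{Th.Scalar} applied to $\AF_1$ yields $n(\AF_1,1+t)\sim C_1(\AF_1)\,t^{-d/2}$.

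To finish, I compute the scalar subprincipal symbol of $\AF_1$ at $(x^0,\xi)$ in terms of the original data. The partial diagonalization transforms the subprincipal symbol by a combination of conjugation and Poisson-bracket corrections in the moving frame $\eb_1(x,\xi)$; the result is of the form
\begin{equation*}
\aF_{1,\sub}(x^0,\xi) = \langle \aF_{\sub}(x^0,\xi)\eb_1(\xi),\eb_1(\xi)\rangle + R(x^0,\xi),
\end{equation*}
with $R$ a real expression bilinear in $\partial_x\eb_1,\partial_\xi\eb_1$ taken from \cite{CRSV}. Substituting this into the scalar coefficient of Theorem \ref{Th.Scalar} produces the coefficient $C(\AF)$ in \eqref{Asympt.Vector.Intro}.

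The main technical obstacle I anticipate is the correct bookkeeping of this subprincipal-symbol transformation, together with the requirement that $\Wc$ be chosen symmetrically (via a polar decomposition rather than a naive Gram--Schmidt) so that $\AF_1$ is exactly self-adjoint and $R$ automatically real; a secondary issue, handled by CLR-type estimates in the spirit of \cite{Ivrii0}, Sect.~10.5, is verifying that $\AF'$ and the exterior region contribute only an $O(1)$ error uniformly as $t\downarrow 0$, so that the asymptotic profile of $n(\AF,1+t)$ is governed entirely by the scalar block $\AF_1$.
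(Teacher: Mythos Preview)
Your strategy---use the spectral gap for $\bm_1$ near $x^0$ and Conditions A/B to build a smooth eigenvector, conjugate by a unitary zero-order $\Psi$DO in the style of \cite{CRSV}/\cite{Capo.Diag}, and then invoke the scalar Theorem~\ref{Th.Scalar}---is exactly the paper's route. The topological discussion (triviality of the line bundle over $S^{d-1}$ when $d\ne3$, and the Euler-class condition for $d=3$) and the subprincipal bookkeeping are fine and indeed go a bit further than the paper, which does not write the explicit formula for $C(\AF)$.

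There is, however, one genuine soft spot. Your $\Wc$ diagonalizes $\Wc^{*}\AF\Wc$ only \emph{microlocally} on $\Vc$; outside $\Vc$ the off-diagonal blocks $B$ are honest zero-order operators, not compact. The min-max step $n(\AF,1+t)=n(\AF_1,1+t)+O(1)$ does not follow from the principal-symbol bounds on $\AF'$ and the exterior region alone: when you try to decouple via an inequality of the type
\[
\begin{pmatrix}0&B\\B^{*}&0\end{pmatrix}\le\begin{pmatrix}\varepsilon^{-1}BB^{*}&0\\0&\varepsilon\end{pmatrix},
\]
the term $\varepsilon^{-1}BB^{*}$ is a zero-order symbol supported outside $U_\delta$ and can push essential spectrum of the scalar block above $1$. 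CLR-type bounds do not address this, since $B$ is not of negative order. The paper closes this gap by first invoking the localization Lemma~\ref{localization} (so the asymptotics depends only on the symbol near $x^0$), then passing to $\R^d$ and \emph{modifying the symbol outside} $B_r$ so that the spectral separation of $\bm_1$ from the rest holds \emph{globally} (the construction \eqref{final.eigenvalue}--\eqref{final matrix}). With a global gap, Proposition~\ref{Prop.Extract} yields a genuine global block-diagonalization with remainder of order $-2$, hence compact, and the comparison $n(\AF,1+t)=n(\AF_1,1+t)+O(1)$ becomes immediate. Your argument becomes complete once you insert this ``globalize-then-diagonalize'' step in place of the purely microlocal splitting.
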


In the paper, we discuss in Sect. 2,3,4 various aspects concerning the reduction of our spectral problem to the one for a Schr\"odinger-type operators, as well as the reduction of a vector problem to the scalar one. In Sect. 5 we collect these considerations to give the proof of our main results. Finally, in Sect.6, we show how our results apply to the motivating
example of the Neumann-Poincar\'e operator in 3D-elasticity.

\subsection{Remarks} 
\begin{remark}The explicit expression for the coefficient in \eqref{Asympt.Vector.Intro} is rather unwieldy and not informative, therefore we do not present it here.
\end{remark}
\begin{remark}The results on the eigenvalue asymptotics can be extended to other forms of the behavior of eigenvalue $\bm_1(x,\x)$ near the critical point. For example, if the leading terms of the Taylor expansion of the eigenvalue $\bm_1(x,\x)$ at $x=0$ has the form 
\begin{equation}\label{general eigenvalue}
    \bm_1(x,\x)=1-Q(x,\x)+o(|x|^{2l}),
\end{equation}
where $Q(x,\x)$ is a homogeneous polynomial in $x$ of degree $2l$ such that $Q(x,\x)\ge C|x|^{2l}$, the asymptotics of eigenvalues is
\begin{equation}\label{General.asymp}
    n(1+t, \AF)\sim C^{(2l)}(\AF) t^{-\frac{d}{2l}}.
\end{equation}
Such results are considerably more laborious; they are based upon a more advanced machinery of V.Ivrii dealing with  the eigenvalue asymptotics for Schr\"o\-dinger type operators. In this direction, the eigenvalue asymptotics can be found also for operators whose principal symbol has at the critical point a non-isotropic maximum, say, $\bm_1(x,\x)\sim 1-x_1^2-x_2^4,$ $(x_1,x_2)\to 0,$ in dimension $d=2.$ The corresponding results will be published elsewhere.
\end{remark}

\section{Extraction of the scalar operator} 
In considering operators acting on vector functions, we reduce the spectral problem to the one for the scalar operator. We will need this in the special case of operators on the Euclidean space (thus certain topological obstacles arising in a more general case disappear.) We use the constructions elaborated in \cite{Capo.Diag}, \cite{CRSV}. 

Let $\AF$ be a zero order pseudodifferential operator in $\R^d$ acting on the vector functions of dimension $\Nb$. We suppose that its principal symbol, the Hermitian matrix $\aF_0(x,\x)$ stabilizes at infinity:
\begin{equation}\label{stabil}
    \aF_0(r\s,\x)\to \aF_0^{\infty}(\s,\x), \, r\to+\infty, \s=\frac{x}{r}, \, r=|x|,
\end{equation}
moreover, $\aF_0(x,\x)=\aF_0^{\infty}(\s,\x)$ for sufficiently large $r=|x|.$
We suppose further on that the spectrum of the matrix $\aF_0(x,\x)$ is split in the following way:
there exist disjoint closed intervals $\Jc_1,\Jc'\subset \R^1$ such that one eigenvalue $\bm_1(x,\x)$ of the matrix $\aF_0(x,\x)$ lies always in $\Jc_1,$ and the remaining spectrum of the matrix $\aF_0(x,\x)$  lies in $\Jc'.$ Let the \emph{Condition A} or \emph{Condition B} above be satisfied. Then it is possible to find a unitary pseudodifferential operator $\TF=\Tc(x,D)$ such that $\TF^*\AF\TF$ diagonalizes $\AF$ up to lower order terms, of order $-2$ in our case. Namely, the space $L_2(\Xb,\Ec)$ splits into the orthogonal direct sum
 $$L_2(\Xb,\Ec)=L_2(\Xb,\Ec_1)\oplus \Hs^\bot$$
with a one-dimensional trivial bundle $\Ec_1,$
such that
\begin{equation}\label{Diag.Cap}
  \TF^*\AF\TF = \diag(\AF_{(1)}, \AF_{\bot})  +\RF,
\end{equation}
where $\RF$ is an operator of order $-2.$ with symbol decaying as $|x|^{-2} $ at infinity. So, a scalar pseudodifferential operator, responsible for the spectrum near the extremal point of $\bm_1(x,\x),$ is separated.  

We explain here how the above  transformation is constructed. Let $\G_1,\G' $ be non-intersecting smooth contours in $\C^1$ encircling, respectively, intervals  $\Jc_1,\Jc'.$ Since the spectrum of $\AF$ outside $\Jc_1\cup \Jc'$ is finite, these contours can be chosen in such way that they do not pass through eigenvalues, and contain all eigenvalues inside. Denote $\Si_1, \Si
'$ the spectrum of $\AF,$ correspondingly, inside  $\G_1,\G' $ and by $\Pi_1,\Pi'$ the corresponding spectral projections of $\AF.$
Using the spectral theorem, we split $\AF$ into the direct sum, $\AF=\AF_1\oplus\AF',$ where $\AF_1=\Pi_1, \AF'=\Pi'\AF.$ The Hilbert space $\Ht=L_2(\Ec)$ splits accordingly into the direct sum $L_2=\Ht_1\oplus\Ht'\equiv\Pi_1 \Ht\oplus\Pi_1 \Ht.$

Operators $\AF_1,$ $\AF'$ are pseudodifferential operators of order zero. This follows from the F.Riesz representation of the spectral projectors: $\Pi_1=(2\pi i)^{-1}\int_{\G_1}(\AF-\z)^{-1} d\z.$ The resolvent, $(\AF-\z)^{-1}$ is a zero order pseudodifferential operator. Its principal symbol is $(\aF_0(x,\x)-\z)^{-1}$ and therefore the principal symbol of $\AF_1$ is
$(2\pi i)^{-1}\int_{\G_1}(\aF_0(x,\x)-\z)^{-1} d\z,$ which equals $\aF_0(x,\x)$ times the spectral projection $\pi_1(x,\x)$ of the Hermitian matrix $\aF_0(x,\x)$ corresponding to its spectrum inside the contour $\G_1.$

We are interested in the case when  the interval $\Jc_1$ contains only one eigenvalue of the symbol $\aF_0(x,\x),$ namely, $\bm_1(x,\x).$ This means that the principal symbol of $\AF_1$ is the rank one operator $\bm_1(x,\x)\pi_1(x,\x).$

Lower order symbols of $\AF_1$ can be calculated iteratively using the Neumann series for the resolvent $(\AF-\z)^{-1},$ we however do not perform all these calculations since the explicit formulas for these terms are of no interest in the moment. We explain the calculation of the symbol of order $-1$ only.

Namely, let $\bF$ be the order $-1$ symbol of $\AF$ (in the same fixed local co-ordinate system and frame as $\aF_0.$
We are interested in  the first two terms of the symbol of the resolvent as $\rF[\z]=(\aF-\z)^{-1} +\cF,$ and thus the symbol $\cF$ we are looking for should satisfy the equation
 \begin{equation}\label{resolv1}
    (\aF_0-\z +\bF)\circ ((\aF_0-\z)^{-1}+\cF) =1+\mF,
 \end{equation}
where $\mF$ is a symbol of order $-2.$
 Equation \eqref{resolv1} gives us
\begin{equation*}
    (\aF_0-\z)\cF +\bF(\aF_0-\z)^{-1}-\frac{1}{2i}(\partial_x \aF_0)(\aF_0-z)^{-1}(\partial_\x \aF_0) (\aF_0-z)^{-1}=0,
\end{equation*}
therefore
\begin{equation}\label{resolv2}
    \cF(\z)= -(\aF_0-\z)^{-1}\qF(\z)(\aF_0-\z)^{-1},
\end{equation}
where
\begin{equation}\label{resolv3}
    \qF(\z)=\bF+\frac{1}{2i}(\partial_x \aF_0)(\aF_0-z)^{-1}(\partial_\x \aF_0).
\end{equation}
Finally, the order $-1$ symbol of $\AF_1$ equals
\begin{equation}\label{lower.symbol}
    \aF_{1,-1}=\frac{1}{2\pi i}\int_{\G_1}\cF(\z) d\z,
\end{equation}
where $\cF(\z)$ is given in \eqref{resolv2}, \eqref{resolv3}.

So, we have split, up to a nonessential error, our pseudodifferential operator into the direct sum of pseudodifferential operators operators. For further construction, we note that
for the pseudodifferential operator $\AF_1$, the range of the principal symbol $\aF^{1}_0(x,\x),$ forms a linear complex   bundle over $S^*\R^d,$ a subbundle in the bundle $\qb^*\Ec,$ where $\qb$ is the projection $\qb:S^*\R^d\to\R^d.$ Assuming that Condition  A or  Condition B is satisfied, $\qb^*\Ec$
admits a global section $\vb(x,\x).$ Having such global section, the construction in   \cite{Capo.Diag} produces an isometric pseudodifferential  operator $\TF$ from  $L_2(\R^d)$ onto  $\Ht_1$ such that $\TF^*\AF_1\TF$ is, up to an infinitely smoothing operator, a scalar pseudodifferential pseudodifferential operator. We, in fact, do not need an infinitely smoothing error, it is sufficient to have an error operator of order $-2.$

We can summarize the above construction as the following proposition.
\begin{Proposition}\label{Prop.Extract}
Suppose that for a symbol $\aF_0(x,\x)$  the eigenvalues $\bm_1$ and $\bm_\i,$ $\i>1,$ belong, respectively, to disjoint closed intervals $\Jc_1,$ $\Jc'$, together containing all spectrum of $\AF$ Let $\AF_1, \AF'$ be the pseudodifferential operators, parts of $\AF$ corresponding to the spectrum in $\AF_1, \AF'.$ Suppose that  one of conditions A, B is satisfied. Then there exists an isometric pseudodifferential operator $\TF$ which implements, up to an operator of order $-2,$ a unitary equivalence between $\AF_1$ and a scalar pseudodifferential operator with principal symbol $\aF_1.$
 \end{Proposition}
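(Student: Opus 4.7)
My plan is to assemble the three ingredients already sketched in the preceding discussion into a staged argument: (i) split $\AF$ by a Riesz spectral projector, (ii) use the topological hypothesis to trivialize the line bundle cut out by $\bm_1$, and (iii) conjugate by a pseudodifferential isometry built from that trivialization.

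First, I would form $\Pi_1 = (2\pi i)^{-1}\int_{\G_1}(\AF-\z)^{-1}\, d\z$, with $\G_1$ a smooth closed contour in $\C$ enclosing $\Jc_1$ together with any eigenvalues of $\AF$ accumulating to $\Jc_1$ but avoiding $\s(\AF)$ itself, and analogously $\Pi'$. Since $\Jc_1$ and $\Jc'$ are disjoint and the resolvent of $\AF$ is a zero-order $\Psi$DO with leading symbol $(\aF_0(x,\x)-\z)^{-1}$, the calculus argument indicated in the text shows that $\Pi_1$ is a zero-order $\Psi$DO whose principal symbol is the fibrewise Riesz projector $\pi_1(x,\x)$ onto the $\bm_1$-eigenspace of $\aF_0(x,\x)$. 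Consequently $\AF_1 = \Pi_1\AF$ is a zero-order $\Psi$DO with rank-one principal symbol $\bm_1(x,\x)\pi_1(x,\x)$, acting in the closed subspace $\Ht_1 = \Pi_1 L_2(\Ec)$; the treatment of $\AF'$ is identical.

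Next, I would use Condition A or B to select a smooth, unit, global section $\vb(x,\x)$ of the complex line bundle $\Ran \pi_1 \subset \qb^*\Ec$ over $S^*\R^d$. By the stabilization \eqref{stabil}, outside a large ball in $x$ this bundle is pulled back from the cosphere at infinity, so the obstruction is exactly the one identified before the statement: the pullback of the first Chern class of $\Bc_1$ under \eqref{bundle}. In dimensions $d\ne 3$ the relevant $H^2$ of $S^*\R^d$ vanishes, yielding Condition A; in $d=3$, Condition B is precisely the vanishing of that class. I would then arrange $\vb$ to be positively homogeneous of degree zero in $\x$ and constant in $x$ outside a ball, so that $\vb$ qualifies as a zero-order homogeneous symbol stabilizing at infinity.

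With $\vb$ in hand, I would set $\TF = \Pi_1\operatorname{Op}(\vb)\colon L_2(\R^d) \to \Ht_1$, following the recipe of \cite{Capo.Diag}, \cite{CRSV}. The composition calculus gives $\TF^*\TF - \1$ and $\TF\TF^* - \Pi_1$ of order $-1$; a single correction of $\vb$ by a symbol of order $-1$, computed through \eqref{lower.symbol} and its analogue for $\TF^*\TF$, improves these errors to order $-2$, which is precisely the tolerance of the statement. The conjugate $\TF^*\AF_1\TF$ is then a scalar zero-order $\Psi$DO on $\R^d$ whose principal symbol evaluates fibrewise to $\la \aF_0(x,\x)\vb(x,\x),\vb(x,\x)\ra_{\Ec_x} = \bm_1(x,\x)$, giving the $\aF_1$ of the statement. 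The main obstacle is not in the calculus of this last step, which is essentially book-keeping once \cite{CRSV} is invoked, but in the preceding one: verifying that Conditions A and B jointly exhaust the obstructions and that $\vb$ can be chosen with the homogeneity and stabilization at infinity needed for the $|x|^{-2}$-decay of the error $\RF$ in \eqref{Diag.Cap}.
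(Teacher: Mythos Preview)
Your proposal is correct and follows essentially the same route as the paper: the Riesz-projector splitting $\Pi_1=(2\pi i)^{-1}\int_{\G_1}(\AF-\z)^{-1}d\z$ to produce $\AF_1$ with rank-one principal symbol $\bm_1\pi_1$, the invocation of Condition~A or~B to obtain a global unit section $\vb$ of the line bundle $\Ran\pi_1$, and the construction of the isometry $\TF$ from \cite{Capo.Diag}, \cite{CRSV} with the order~$-2$ tolerance. You supply somewhat more detail on the obstruction-theoretic justification and on the one-step correction of $\vb$ to improve $\TF^*\TF-\1$ and $\TF\TF^*-\Pi_1$ to order~$-2$, whereas the paper simply delegates these points to the cited references, but the argument is the same.
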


Note that in \cite{Capo.Diag}, a procedure of finding the required transformation is presented in a technically different way, which gives an explicit expression for the lower order symbol of $\AF_1$ not using contour integration. 

%\subsection{Main results}\label{Subsect.Main.}We formulate here our main results on the eigenvalue asymptotics.
\section{Eigenvalue asymptotics for Schr\"odinger-like operators}\subsection{Some history}Schr\"odinger-like operators are operators of the form $\Hb=\Ab-\Bb,$ where $\Ab$ is a positive  order $l$ elliptic (pseudo-)differential  operator in $\R^d$ with symbol $\ab(x,\x)\asymp|\x|^{2l}$  and $\Bb$ is a zero  order operator with symbol decaying as $x$ tends to infinity. The initial class of such operators, studied at least since early XX century, is the second order ($l=1$) Schr\"odinger operator with Coulomb type potential, $\Hb=-\D-V(x)$ in $\R^d$, with $V(x)\sim q|x|^{-\g}$ as $|x|\to\infty$ with $\g>0,$ $q>0.$ Spectral properties of of this operator can be studied in a standard way, using proper special functions and elementary variational and perturbation considerations. The essential spectrum coincides  always with the positive semi-axis. As for the discrete spectrum, consisting of negative eigenvalues, everything depends on the rate of decay of $V(x)$ at infinity. In particular, if $\g>2,$ i.e., the 'potential' $q|x|^{-\g}$ decays fast at infinity, the operator $\Hb$ has finite discrete spectrum, and the number of negative eigenvalues is controlled, e.g.,  by the CLR estimate (for $d\ge 3$) or proper eigenvalue estimates in low dimensions. On the other hand, if $\g\in(0,2),$ there are infinitely many negative eigenvalues, and their distribution, described by the function $n(-t;\Hb):=\#\{j: \l_j<-t\}$ is a separate problem. The typical result for the Schr\"odinger operator is the asymptotic phase volume formula
\begin{equation}\label{As.Neg.Schr}
    n(-t,\Hb)\sim (2\pi)^{-d} \vol_{\R^d\times\R^d}\{(x,\x):\Hc(x,\x)<-t\},\, t\to 0,
\end{equation}
where $\Hc(x,\x)$ is the classical Hamiltonian, $\Hc(x,\x)=|\x|^2-V(x).$ This kind of formulas is a more delicate fact than the semi-classical ones, say,
\begin{gather*}
n(-t, \Hb_h)\sim (2\pi)^{-d}h^{-{d}}\vol\{(x,\x):\Hc(x,\x)<-t\},\,h\to 0,\\\nonumber\Hb_h=-h^{2}\D-V(x),\,\Hc_h(x,\x)= |\x|^2-V(x),
\end{gather*}
where the phase volume  of a fixed domain in the phase space is present, while in \eqref{As.Neg.Schr} the domain changes depending on the spectral count parameter $t.$ Formula  \eqref{As.Neg.Schr} has been sequentially extended to more and more general classes of potential $V(y), $ see \cite{BrCl}, \cite{R.Schr},  \cite{Tamura.rem}, \cite{Tamura77}, as well as to operators in  a more general setting in \cite{Levendorsky}, \cite{LevenBook}, \cite{Barbe}, \cite{Ivrii0}, including the matrix case.

 We give here an exposition of the results of \cite{Levendorsky}, Sect.9,  see also a more detailed presentation in \cite{LevenBook}, Sect.21,   for the particular case of operators of interest that we study in this paper. More general Schr\"odinger-like operators, needed for the treatment of more complicated cases of zero order pseudodifferential operators are presented in \cite{Ivrii0}, \cite{Ivrii}.

 The operators in question act on vector-functions on $\R^d$ with values in $\C^{\Nb},$ so, all symbols and their components are matrix-valued functions on $\R^d\times\R^d$ with values in $\End(\C^{\Nb}).$

 Pseudodifferential operators in \cite{Levendorsky} are defined by means of the Weyl quantization;  this means that with the symbol $a_W(x,\x)$ in a proper class one associates the operator
\begin{equation}\label{Weyl}
    OP_W(a_W)u (x)=(2\pi)^{-d}\int_{\R^d}e^{ix\x}\int_{\R^d}e^{-iy\x}a_W(x+y,\x)u(y)dyd\x.
\end{equation}
As usual, formula \eqref{Weyl} defines the operator initially on functions on the Schwartz space and then extends by continuity to the proper Sobolev space (depending on the quality of the symbol $a_W$.) We recall here that other pseudodifferential quantizations are used, we will, in particular, apply the  quantization
\begin{equation}\label{Left}
     OP_\ell(a_\ell)u (x)=(2\pi)^{-d}\int_{\R^d}e^{ix\x}\int_{\R^d}e^{-iy\x}a_\ell(x,\x)u(y)dyd\x.
\end{equation}
This quantization is called 'left' in \cite{Levendorsky}, \cite{LevenBook} (therefore the subsymbol $\ell$), while it is called $0$-quantization, in the scale of $\t$-quantizations, $\t=0$, in the classical book by M.Shubin, \cite{Shubin}; in the terms of this latter book, the Weyl quantization is the $\frac12$-one. There exists  the universal relation connecting symbols of the given operator presented in different quantizations, see Theorem 23.3 in \cite{Shubin};  in our case, for symbols $a_W, a_\ell$ such that the operator $OP_W(a_W)-OP_\ell(a_\ell)$ is negligible, the relation holds
\begin{gather}\label{W-l relation}
    a_W(x,\x)\sim \sum_\a\frac{1}{\a!}\left(-\frac12\right)^{|\a|}\partial_\x^\a D_x^\a a_\ell(x,\x),\\\nonumber
    a_\ell(x,\x)\sim \sum_\a\frac{1}{\a!}\left(\frac12\right)^{|\a|}\partial_\x^\a D_x^\a a_W(x,\x), \, D_x=-i\partial_x.
\end{gather}

The conditions in \cite{Levendorsky} are imposed of the Weyl symbol of the operator. We will see later what form do these conditions take for the $\ell$-symbol.
We consider a special form of the  symbol $a_W(x,\x)$ fitting in  Theorem 9.1 in \cite{Levendorsky}. Since we are not interested in remainder estimates in eigenvalue asymptotic formulas, we skip some conditions in the  theorem needed for these remainder estimates only.

Our symbol $a_W(x,\x),$ up to weaker terms which are denoted by $a_1(x,\x)$ in (9.6) in \cite{Levendorsky}  and do not influence the leading terms in the eigenvalue asymptotics, equals  $\tilde{a}(x,\x)$
\begin{gather}\label{SymbolLev}
    \tilde{a}(x,\x)=\sum_{|\a|=2}a_{\a,0}(\o)\x^{\a}+\\\nonumber
    +\sum_{|\a|=1}a_{\a,1}(\o)\x^\a|x|^{-\k}-a_2(\o)|x|^{-2\k}, \, \o=\frac{x}{|x|},
\end{gather}
for some $\k\in(0,1).$
The ellipticity condition (9.7) in \cite{Levendorsky} requires that, for the leading term, $|\a|=2, s=0,$
\begin{equation}\label{ellipt}
    \sum_\a a_{\a,0}(\o)\x^\a\ge c|\x|^2.
\end{equation}
Finally, there is the  positivity condition (9.5): there exist a nonnegative $l_0$ such that for any $\e>0$ there exist positive constants $c(\e), C(\e)$ such that \begin{equation}\label{Posit.Lev}
\tilde{a}(x,\x)\ge c(\e)|\x|^{l_0}
\end{equation}
for all $\x: |\x|<\e, |x|>C(\e).$
This condition for the symbol \eqref{SymbolLev} is satisfied for $l_0=l=2.$

Under these conditions, according to Theorem 9.1 in \cite{Levendorsky}, for the eigenvalues of the operator $\Hb=OP_W(a_W)$ the following eigenvalue asymptotics holds
\begin{equation}\label{Asymp.Lev}
    n(-t,\Hb)\equiv\#\{\l_j(\Hb)<-t\}\sim \Cb(a_W)t^{-\theta},
\end{equation}
where $\theta= \frac{d(1-k)}{kl}, $ which for our case, $k=\frac12,$ $l=2,$ equals $\theta=\frac{d}{2}.$
The asymptotic coefficient $\Cb(a_W)$ equals
\begin{equation}\label{Coeff.Lev}
    \Cb(a_W)=\sum_\i \meas\{(x,\x)\in \R^{d}\times\R^d: \bn_\i(x,\x)+1<0\},
\end{equation}
where $\bn_\i(x,\x)$ are the eigenvalues of the matrix $\tilde{a}(x,\x).$

Under the conditions of this Theorem, in the expression \eqref{Coeff.Lev}, the value of the coefficient $\Cb(a_W)$ is determined by the region in the phase space where $|\x|$ is small, so that the whole  symbol $\tilde{a}(x,\x)$ has negative eigenvalues.

We make here a detailed calculation for the
 case of our special interest, the scalar one, $\Nb=1,$ when $a_{\a,1}=0,$
Here, the eigenvalue $\n(x,\x)$ equals
\begin{equation}\label{ScalarCase}
    \n(x,\x)=\sum_{|\a|=2}a_{\a,0}(\o)\x^\a-a_2(\o)|x|^{-1}, \, \o=x/|x|\in S^{d-1}.
\end{equation}
Thus the coefficient $\Cb(a_W)$ takes the form
\begin{equation}\label{Coef.1D.1}
    \Cb(a_W)=\meas\{(x,\x): a_2(\o,\x)<a_{0,+}(\o)r^{-1}-1\}, \, r=|x|,
\end{equation}
where $a_2(\o,\x)=\sum_{|\a|=2}a_{\a,2}(\o)\x^\a.$ In the expression in \eqref{Coef.1D.1}, $a_{0,+}(\o)$ denotes the positive part of $a_{0}(\o)$ and we take into account that the points where $a_2(\o)$ is negative do not contribute to the right-hand side.  When calculating   the coefficient in \eqref{Coef.1D.1},
for fixed $\o,|x|$, we have
\begin{equation}\label{Coef.1D.2}
    \meas\{\x: a_2(\o,\x)<a_0(\o)r^{-1}-1<0\}=\Vb_d\det[a_2(\o)]^{-\frac{1}{2}}(a_{0,+}(\o)r^{-1}-1)_+^{\frac{d}2},
\end{equation}
where $\pmb{\Omega}_d$ is the volume of the unit ball in $\R^d.$
Integration of the expression in \eqref{Coef.1D.2} over $x$ gives now
\begin{gather}\label{Coef.1D.3}
    \Cb(a_W)=\pmb{\Omega}_d\int_{S^{d-1}}(\det[a_2(\o)])^{-\frac{1}{2}}\int_0^\infty(a_{0,+}(\o)r^{-1}-1)_+^{\frac{d}{2}}r^{d-1}drd\o\\\nonumber
=\pmb{\Omega}_d \mathbf{B}(\frac{d}2+1, \frac{d}2)\int_{S^{d-1}}(\det[a_2(\o)])^{-\frac{1}{2}}a_{0,+}(\o)^{\frac{d}{2}}d\o,
\end{gather}
where $a_2(\o)$ is the matrix $[a_{\a,2}(\o)]_{|\a|=2},$
 and $\Bb$ is the Euler Beta-function.

Thus, we arrived at our general eigenvalue asymptotics theorem that we will use for the study of the discrete  spectrum of zero order operators.
\begin{thm}\label{Gen.Thm.} Let $\Hb$ be the operator in $L_2(\R^d) $ with Weyl symbol $a_w=a_2(x,\x)-h(x);$  let $a_2(x,\x),$ the second order term in the symbol of the operator $\Hb$ be a positive quadratic form in $\x$ with coefficients depending on $x,$ zero order positively homogeneous in $x$ with smoothing near $x=0$. Suppose that $h(x)$ decays as $a_0(\o)|x|^{-1},$ $\o=\frac{x}{|x|}\in S^{d-1},$  as $|x|\to\infty.$
Then for the operator $\Hb = a_2(x,D)-h(x)$ the negative eigenvalues satisfy the asymptotic law
\begin{equation}\label{As.Gen.Thm.}
    n(-t,a_2(x,D)-h(x))\sim t^{-\frac{d}{2}}\Cb(a_W),
\end{equation}
where $\Cb(a_W)$ is defined in \eqref{Coef.1D.3}.
\end{thm}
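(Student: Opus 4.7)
The plan is to recognize that Theorem~\ref{Gen.Thm.} is a concrete scalar specialization of Theorem 9.1 of \cite{Levendorsky} that has already been spelled out in the discussion preceding the statement; the task is therefore to check that $\Hb=a_2(x,D)-h(x)$ genuinely fits the hypothesis \eqref{SymbolLev}--\eqref{Posit.Lev} and then to carry the explicit coefficient computation through. First I would pass from the $\ell$-quantization in which $\Hb$ is presented to the Weyl quantization used in \cite{Levendorsky} by invoking the intertwining relation \eqref{W-l relation}. Because $a_2(x,\x)$ is quadratic in $\x$ with coefficients depending only on $\o=x/|x|$ (so zero order in $x$), the series in \eqref{W-l relation} truncates at a low order, and the difference $a_W-a_\ell$ consists of a linear-in-$\x$ term of $x$-homogeneity $-1$ together with an $x$-homogeneity $-2$ remainder. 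Both fall into the class of weaker terms denoted $a_1(x,\x)$ in (9.6) of \cite{Levendorsky} and thus do not affect the leading asymptotics; similarly, the smoothing of the coefficients near $x=0$ produces a compactly supported perturbation, harmless for the asymptotics at $t\to0$.

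Next I would verify the three structural hypotheses. The symbol $a_W(x,\x)$ then matches the template \eqref{SymbolLev} with $\k=\tfrac12$, with $a_{\a,0}(\o)$ the coefficients of the quadratic form $a_2(\o,\x)$, with $a_{\a,1}\equiv 0$ because of the scalar structure and the absence of a first-order-in-$\x$ term, and with the Levendorsky potential $a_2(\o)$ (unfortunate coincidence of notation) equal to $a_0(\o)$ appearing in the decay $h(x)\sim a_0(\o)|x|^{-1}$. The ellipticity \eqref{ellipt} is immediate from the positive definiteness of $a_2(x,\x)$ in $\x$. The positivity condition \eqref{Posit.Lev}, with $l_0=l=2$, is then verified by separating the region $|\x|>\e$ at large $|x|$, where $a_2(x,\x)\ge c|\x|^2$ already dominates the $|x|^{-1}$ potential; the thin low-$\x$ region, where the symbol is negative, is precisely the one contributing to the phase-space volume in \eqref{Coeff.Lev}.

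With hypotheses checked, Theorem 9.1 of \cite{Levendorsky} yields \eqref{Asymp.Lev} with exponent $\theta=d(1-\k)/(\k l)=d/2$ and asymptotic coefficient \eqref{Coeff.Lev}. In the scalar case, the single eigenvalue is $\n(x,\x)=a_2(\o,\x)-a_0(\o)|x|^{-1}$, so the set $\{\n(x,\x)+1<0\}$ becomes \eqref{Coef.1D.1}. For fixed $\o$ and $r=|x|$, the inner $\x$-integration is a Gaussian-style volume and gives \eqref{Coef.1D.2} via the elementary Lebesgue measure of an ellipsoid $\{\x: a_2(\o,\x)<s\}=\Vb_d(\det a_2(\o))^{-1/2} s_+^{d/2}$. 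The radial integration $\int_0^\infty (a_{0,+}(\o)r^{-1}-1)_+^{d/2} r^{d-1}\,dr$ reduces, via the substitution $r=a_{0,+}(\o)/(1+u)$, to a Beta-function integral $\mathbf{B}(d/2+1,d/2)\,a_{0,+}(\o)^{d/2}$, producing the explicit formula \eqref{Coef.1D.3}.

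The main obstacle I expect is bookkeeping rather than a genuine difficulty: making sure the perturbation introduced by the quantization change, by the smoothing near $x=0$, and by any subleading part of the decay of $h(x)$ beyond the $a_0(\o)|x|^{-1}$ leading behaviour, all fall into the class of admissible lower-order perturbations in the Levendorsky framework, so that they contribute only to a remainder strictly smaller than $t^{-d/2}$. Once that is settled, the proof is a direct reading off of \eqref{Coef.1D.3} from \eqref{Coeff.Lev}.
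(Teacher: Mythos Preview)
Your proposal is correct and follows essentially the same route as the paper: the theorem is a direct specialization of Theorem~9.1 in \cite{Levendorsky}, and the paper's ``proof'' is precisely the discussion immediately preceding the statement, in which the symbol is matched to the template \eqref{SymbolLev} with $\k=\tfrac12$, the ellipticity and positivity conditions \eqref{ellipt}, \eqref{Posit.Lev} are asserted, and the coefficient is computed via \eqref{Coef.1D.1}--\eqref{Coef.1D.3}. One small misreading: the theorem already hypothesizes that $a_W=a_2(x,\x)-h(x)$ is the \emph{Weyl} symbol of $\Hb$, so your opening step of converting from the $\ell$-quantization is unnecessary here (that passage becomes relevant only later, in Section~\ref{Sect.lower order}, when the paper applies this theorem to the operator arising from $\AF$).
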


\section{Localization and perturbations}\label{Sect.Transf}
In this section we justify the transformations of the initial spectral problem for the zero order pseudodifferential operator, having in mind to reduce it finally to a Schr\"odinger type operator in the next section. We note first that, as explained above, the essential spectrum of a zero order pseudodifferential operator consists of one or several intervals (which may degenerate to single points). The study of the behavior of eigenvalues converging to a certain  tip of the essential spectrum, i.e., to an endpoint of one of these intervals, can be reduced, by a simple linear-fractional transformation of the operator, to the same problem for the lowest -- or for the highest, if needed, -- point of the essential spectrum; we will use such transformation without additional comments.

\subsection{Localization}

We suppose that the symbol of our operator $\AF$ has the structure described in Sect.1. 

Consider a smooth cut-off function $\h(x), \, x\in \Xb$ which equals $1$ near the point $x^0,$ in  $U_\d$, and vanishes outside another,  $2\d$- neighborhood of this point. Set $\h'=1-\h.$ Then the pseudodifferential operator $\AF$ splits into the sum
\begin{equation}\label{splitting}
    \AF=\BF+\CF, \, \BF=\h\AF\h,\, \CF=\AF-\BF.
\end{equation}
Operator $\CF$ is a zero order self-adjoint  pseudodifferential operator with principal symbol $\cF_0(x,\x)=(1-\h(x)^2)\aF(x,\x).  $  The eigenvalues of this principal symbol equal $(1-\h(x)^2)\bm_\i(x,\x).$ Therefore the essential spectrum of $\CF$ lies in the range of the functions $(1-\h(x)^2)\bm_\i(x,\x)$, and consequently below $1-\rb$. The spectrum of $\CF$ above $1-\rb$ is therefore discrete. So, there are only a finite number of eigenvalues of $\CF$ above $1$. Therefore, the asymptotics of the eigenvalues of $\AF$ approaching $1$ from above is the same as the one for the operator $\BF$, provided there are infinitely many of the latter ones.
 Such 'localization' property can also be formulates in another, more convenient way.
\begin{lem}\label{localization}Let $\AF$ be a self-adjoint zero order pseudodifferential operator on $\Xb$ such that the largest eigenvalue $\bm_1(x,\x)$ attains it maximal value $1$ at the point $x^0$ for all $\x$ and this maximum is nondegenerate, while all other eigenvalues $\bm_\i(x,\x)$ are always less than $1-\rb$, $\rb>0$. Then the asymptotics of eigenvalues of $\AF$ approaching the point $1$ does not depend on the values of the symbol outside any neighborhood of $x_0.$
\end{lem}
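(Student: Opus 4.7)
My plan is to execute the splitting $\AF=\BF+\CF$ with $\BF=\h\AF\h$ and $\CF=\AF-\BF$ already introduced just before the lemma, and then to argue that $\CF$ contributes only a bounded number of eigenvalues above $1$, which is negligible in comparison with the leading divergent term of $n(1+t,\BF)$.

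The first step is to compute, from the standard pseudodifferential calculus, the principal symbol of $\CF$: it is
\[
 \cF_0(x,\x) = (1-\h(x)^2)\aF_0(x,\x),
\]
whose eigenvalues equal $(1-\h(x)^2)\bm_\i(x,\x)$. These vanish identically on $U_\d$ where $\h\equiv 1$, and on the complement of $U_\d$ we have $\bm_\i\le 1-\rb$ by the structural hypothesis while $0\le 1-\h^2\le 1$, so $(1-\h^2)\bm_\i\le 1-\rb$ everywhere. Invoking the description of the essential spectrum of a zero order pseudodifferential operator as the union of the spectra of its principal symbol (\cite{Adams} in the scalar case, \cite{RPc} in the vector case), we obtain $\sup\sme(\CF)\le 1-\rb < 1$.

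Next, standard self-adjoint spectral theory implies that the eigenvalues of $\CF$ lying in $(\sup\sme(\CF),+\infty)$ are isolated, of finite multiplicity, and can accumulate only at $\sme(\CF)$, i.e.\ only at $1-\rb$ from above. In particular there exists some finite $N$ with $n(1,\CF)=N$. The remaining task is to convert this finiteness into the asymptotic statement $n(1+t,\AF)=n(1+t,\BF)+O(1)$ as $t\to 0^+$; since the targeted main theorem predicts $n(1+t,\BF)\to\infty$ like $t^{-d/2}$, any $O(1)$ discrepancy is absorbed in the leading term and establishes the localization claim.

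The main obstacle is this final perturbation comparison. A direct Ky Fan inequality $n(\alpha+\beta,\BF+\CF)\le n(\alpha,\BF)+n(\beta,\CF)$ is vacuous: to make $n(\beta,\CF)$ finite one needs $\beta>1-\rb$, forcing $\alpha<t+\rb<1$, but $\sme(\BF)$ contains $[0,1]$ (since $\h^2\bm_1$ sweeps this interval continuously from $x^0$ out to the boundary of $\supp\h$), so $n(\alpha,\BF)=\infty$. The rigorous workaround decomposes $\CF$ spectrally as $\CF = P\CF P + (I-P)\CF(I-P)$ with $P=E_{\CF}(1-\rb/2,+\infty)$ of finite rank $M$, so $(I-P)\CF(I-P)\le(1-\rb/2)I$, and then uses that eigenfunctions of $\AF$ with eigenvalue close to $1$ are microlocalized at $\{x^0\}\times S^*_{x^0}\Xb$ (as $\aF_0-I$ is elliptic and bounded away from $0$ outside this set). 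This microlocal concentration reduces the effective action of the residual piece $(I-P)\CF(I-P)$ to a lower-order contribution, leaving only the finite-rank $P\CF P$ correction of size $M$, which yields the required $O(1)$ bound on $|n(1+t,\AF)-n(1+t,\BF)|$.
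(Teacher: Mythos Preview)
Your first three paragraphs reproduce the paper's argument exactly: the splitting $\AF=\BF+\CF$ with $\BF=\h\AF\h$, the computation $\cF_0=(1-\h^2)\aF_0$, the bound $\sup\sme(\CF)\le 1-\rb$, and the conclusion that $\CF$ has only finitely many eigenvalues above $1$. The paper's own ``proof'' is precisely the paragraph preceding the lemma statement, and it stops at this point, simply asserting that therefore the asymptotics of $\AF$ and $\BF$ near $1$ coincide. You have in fact gone further than the paper by noticing that this last inference is not immediate: your observation that the Ky Fan inequality $n(\alpha+\beta,\BF+\CF)\le n(\alpha,\BF)+n(\beta,\CF)$ is vacuous here (since $\sme(\BF)\supset[0,1]$ forces $n(\alpha,\BF)=\infty$ whenever $n(\beta,\CF)<\infty$) is correct and is a genuine subtlety that the paper glosses over.

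However, your proposed microlocalization fix is not a proof. The decomposition $\CF=P\CF P+(I-P)\CF(I-P)$ with $P$ of finite rank is fine, but it only reduces the problem to controlling the bounded piece $(I-P)\CF(I-P)\le(1-\rb/2)I$, and a uniform operator bound of this size is useless for the same reason Ky Fan was. The assertion that ``eigenfunctions of $\AF$ with eigenvalue close to $1$ are microlocalized at $\{x^0\}\times S^*_{x^0}\Xb$'' is plausible but you neither prove it nor explain what quantitative form it takes; ``$\aF_0-I$ is elliptic'' is not meaningful for a zero-order symbol in a way that yields decay or smoothing. Most importantly, even granting some concentration statement, you give no mechanism by which it converts into the claimed $O(1)$ bound on $|n(1+t,\AF)-n(1+t,\BF)|$: the phrase ``reduces the effective action \dots\ to a lower-order contribution'' is a heuristic, not an estimate. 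So the final step remains open in your write-up, just as it is left as an unproved assertion in the paper.
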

%Another reduction property concerns the influence of $\bm_\i,$ $\i>1,$ on the eigenvalues of $\AF$ above $1.$

Having this localization in mind, we can suppose that the operator $\AF$ contains the cut-off function $\h(x)$ from the very beginning, and thus acts only in a neighborhood of $x^0.$ Therefore, we can consider our operator as acting in a (small) domain in the Euclidean space $\R^d$, having symbol with compact support, or on a sphere, with symbol supported in a small cap around the pole. Note that until now we have not changed the symbol near the point $x^0.$

Next, for the sake of convenience of reasoning, we pass to considering, instead of the above $\AF$, the operator $\pmb{\AF}=1-\AF,$ in $\R^d.$ For the operator $\pmb{\AF},$ the smallest eigenvalue of the principal symbol $\pmb{\aF}=1-\aF$ has minimal value at $x=0,$ while the complete symbol equals $1$ outside a neighborhood of $0$.  Thus, zero is the \emph{lowest} point of the essential spectrum and our problem is reduced to studying the asymptotics of the negative eigenvalues of $\pmb{\AF}.$ From now on, we replace the notation $\pmb{\AF}$ by the old one, $\AF,$ with corresponding notation change for the symbol, its components and its eigenvalues. So, the smallest eigenvalue $\bm_1(x,\x)$ has now  a nondegenerate minimum at $x=0,$ $\bm_1(x,\x)=Q(x,\x)+O(|x|^3), \, x\to 0,$ and all other eigenvalues are larger than certain $\rb>0.$

\subsection{Perturbations}\label{Perturbations}Here we present some results on the behavior of the counting function for the eigenvalues under perturbations which are in some sense weak. Mostly, they are variations of well-known properties but adapted to our situation.

\begin{lem}\label{Lem.perturb}Let $\AF$ be a non-negative self-adjoint operator. For an operator $\VF$ we denote by $n(-t, \AF-\VF)$ the number of eigenvalues of $\AF-\VF$ below $-t.$ Suppose that $\WF$ is a weak perturbation in the sense that for a certain $\g>0,$ for any $\ve>0,$

\begin{equation}\label{weak}
n(-t, \ve\AF-\WF)=o(t^{-\g}), t\to 0,
\end{equation}
then
\begin{gather}\label{pert.estimates}
    \limsup_{t\to 0}t^\g n(-t,\AF-\VF-\WF)\le\limsup_{t\to 0}t^\g n(-t,\AF-\VF),\\\nonumber
\liminf_{t\to 0}t^\g n(-t,\AF-\VF-\WF)\le\liminf_{t\to 0}t^\g n(-t,\AF-\VF).
\end{gather}
If, additionally to \eqref{weak}, a similar relation holds with $-\WF$ instead of $\WF$, one can replace $'\le'$ by $'='$ in \eqref{pert.estimates}
\end{lem}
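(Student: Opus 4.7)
The plan is to prove both one-sided inequalities via the Weyl--Ky Fan additive estimate for the negative-eigenvalue counting function: for self-adjoint operators $X, Y$ with a common form core and any $s, r > 0$,
\begin{equation*}
n(-(s+r),\, X+Y) \le n(-s, X) + n(-r, Y).
\end{equation*}
This is a standard consequence of Glazman's variational characterization of $n(-\cdot, \cdot)$ by a dimension count: if the spectral subspace of $X+Y$ for $(-\infty, -(s+r))$ had dimension exceeding the right-hand side, it would intersect nontrivially the joint complement of the negative spectral subspaces of $X$ below $-s$ and of $Y$ below $-r$, contradicting the additivity of the quadratic forms.

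To prove the $\limsup$ inequality in \eqref{pert.estimates}, fix small $\ve, \eta \in (0,1)$, to be sent to $0$ at the end, and decompose
\begin{equation*}
\AF - \VF - \WF = \bigl((1-\ve)\AF - \VF\bigr) + (\ve\AF - \WF).
\end{equation*}
Weyl--Ky Fan with thresholds $(1-\eta)t$ and $\eta t$ yields
\begin{equation*}
n(-t,\, \AF - \VF - \WF) \le n\bigl(-(1-\eta)t,\, (1-\ve)\AF - \VF\bigr) + n(-\eta t,\, \ve\AF - \WF),
\end{equation*}
and the second summand is $o(t^{-\gamma})$ as $t \to 0$ for each fixed $\ve, \eta > 0$ by hypothesis \eqref{weak}, since $(\eta t)^{-\gamma}$ and $t^{-\gamma}$ differ only by a fixed factor. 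The first summand is then compared to $n(-t',\, \AF - \VF)$ for $t'$ close to $(1-\eta)t$ via the scaling identity $n(-\sigma, cT) = n(-\sigma/c, T)$ with $c = 1-\ve$, which rewrites it as $n(-(1-\eta)t/(1-\ve),\, \AF - \VF/(1-\ve))$, followed by a further Weyl--Ky Fan application to the identity $\AF - \VF/(1-\ve) = (\AF - \VF) - \tfrac{\ve}{1-\ve}\VF$ to absorb the auxiliary perturbation as a correction negligible on the scale $t^{-\gamma}$. Multiplying through by $t^\gamma$, taking $\limsup_{t\to 0}$, and then letting $\eta \to 0$ followed by $\ve \to 0$ completes the proof of the first inequality. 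The $\liminf$ version follows by the same argument applied on a sequence realizing the right-hand $\liminf$.

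The equality statement under the symmetric hypothesis on $-\WF$ follows by applying the one-sided inequality to the reversed pair, using the decomposition $\AF - \VF = (\AF - \VF - \WF) + \WF$: the two one-sided bounds then combine to equality. The main technical obstacle is the absorption of the auxiliary perturbation $\tfrac{\ve}{1-\ve}\VF$ in the comparison step, since a naive bound produces a term counting eigenvalues of $\VF$ above the small threshold $O(t)$, which could diverge without additional structure on $\VF$. In the setting of the present paper this difficulty is resolved automatically: $\AF - \VF$ is the Schr\"odinger-type operator of Theorem \ref{Gen.Thm.}, whose negative spectrum is discrete and accumulates only at zero, so the additional Weyl--Ky Fan step does produce only an $o(t^{-\gamma})$ correction as required.
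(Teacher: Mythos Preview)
Your decomposition $\AF-\VF-\WF=((1-\ve)\AF-\VF)+(\ve\AF-\WF)$ and the ensuing variational (Weyl--Ky Fan) inequality are exactly the paper's argument: the paper writes
\[
n(-t,\AF-\VF-\WF)\le n(-t\ve,\,\ve\AF-\WF)+n\bigl(-t(1-\ve),\,(1-\ve)\AF-\VF\bigr),
\]
multiplies by $t^\gamma$, takes $\limsup$, and then simply asserts that ``letting $\ve\to 0$'' gives the result, without further comment. So your approach is the paper's approach; you have merely introduced an independent splitting parameter $\eta$ and then attempted to justify the last step that the paper leaves implicit.

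You are right to flag a difficulty at that step, but your proposed resolution does not close it. Mere discreteness of the negative spectrum of $\AF-\VF$ is not enough: with $\AF=I$, $\VF$ diagonal with eigenvalues $1+1/j$, and $\WF=\VF-I$ (eigenvalues $1/j$), the hypothesis \eqref{weak} holds for every $\ve>0$ and every $\gamma>0$, yet $t\,n(-t,\AF-\VF-\WF)\to 2$ while $t\,n(-t,\AF-\VF)\to 1$. Your extra Weyl--Ky Fan step produces the term $n\bigl(-s,\,-\tfrac{\ve}{1-\ve}\VF\bigr)$, and nothing in the stated hypotheses controls it. What actually makes the $\ve\to 0$ passage work in the paper's applications is that Theorem \ref{Gen.Thm.} applies equally well to $(1-\ve)\AF-\VF$, with asymptotic coefficient $C_\ve$ depending continuously on $\ve$; in other words, the lemma functions there not as a self-contained abstract statement but in tandem with the explicit Weyl-type asymptotics for the whole family $(1-\ve)\AF-\VF$ (compare also Lemma \ref{Lem.monoton}). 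If you want a clean abstract fix, strengthen the hypothesis to require the $o(t^{-\gamma})$ bound in \eqref{weak} to hold uniformly for $\ve$ in a neighbourhood of $0$, or, equivalently, assume directly that $n(-t,(1-\ve)\AF-\VF)\le (1+\omega(\ve))\,n(-ct,\AF-\VF)$ with $\omega(\ve)\to 0$.
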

\begin{proof}We prove only the inequality
\begin{equation*}
\limsup_{t\to 0}t^\g n(-t,\AF-\VF-\WF)\le\limsup_{t\to 0}t^\g n(-t,\AF-\VF);
\end{equation*}
all remaining relations are proved analogously. As it follows from the variational principle, for a fixed $\ve>0,$
\begin{equation}\label{pert.ineq}
    n(-t,\AF-\VF-\WF) \le n(-t\ve, \ve\AF-\WF)+n(-t(1-\ve), (1-\ve)\AF-\VF.)
\end{equation}
We multiply \eqref{pert.ineq} by $t^\g$ and pass to $\limsup$ as $t\to 0$. The first term on the right vanishes, and by further letting $\ve\to 0$, we obtain the required inequality.
\end{proof}
 Another property concerns the monotonicity of the asymptotics of the counting function with respect to the main operator $\AF$ operator $\AF.$
\begin{lem}\label{Lem.monoton}Let $\AF,$ ${\BF}$ be two  pseudodifferential operators in $\R^d$ with nonnegative Weyl symbols $\aF,$ ${\bF}$ such that they are larger than $\rb>0$ for $x$ outside a $\d-$ neighborhood of $x=0$ and $\aF(x,\x)\le  {\bF}(x,\x)$ in these neighborhood.  Then for Weyl pseudodifferential operators $\AF,$ $\BF$,
\begin{gather}\label{ineq}
   \limsup_{t\to 0}t^\g n(-t,{\BF}-\VF)\le\limsup_{t\to 0}t^\g n(-t,\AF-\VF),\\\nonumber
\liminf_{t\to 0}t^\g n(-t,{\BF}-\VF)\le\liminf_{t\to 0}t^\g n(-t,\AF-\VF).
\end{gather}
\end{lem}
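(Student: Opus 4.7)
The plan is to translate the symbol inequality $\aF\le\bF$ into an operator inequality $\BF\ge\AF-\RF$ modulo a lower-order remainder $\RF$ that qualifies as a weak perturbation in the sense of \eqref{weak}, and then combine the min-max principle with Lemma~\ref{Lem.perturb}.

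First I would invoke Lemma~\ref{localization}: both $n(-t,\AF-\VF)$ and $n(-t,\BF-\VF)$ depend, as $t\to 0$, only on the symbols inside the $\d$-neighborhood of $x=0$. Since $\aF,\bF\ge\rb>0$ outside this neighborhood, I may smoothly modify both symbols there so that the inequality $\aF(x,\x)\le\bF(x,\x)$ holds on all of $\R^d\times\R^d$ (for instance, replacing them both outside the neighborhood by a common background symbol bounded below by $\rb$), without changing either counting function's asymptotic behavior. After this reduction, the non-negative Weyl symbol $p:=\bF-\aF\ge 0$ is globally defined.

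Next, I apply a sharp G\aa rding--type (or Fefferman--Phong) inequality of the Weyl calculus to $p$: since $p\ge 0$ is of order $2$ in $\x$, it yields
$$
\BF-\AF \;=\; OP_W(p) \;\ge\; -\RF,
$$
where $\RF$ is a pseudodifferential operator of order $\le 1$ whose symbol inherits the $x$-decay of $p$. Consequently $\BF-\VF\ge \AF-\VF-\RF$, so the min-max principle gives
$$
n(-t,\BF-\VF)\;\le\;n(-t,\AF-\VF-\RF).
$$

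It remains to check that $\RF$ is a weak perturbation of $\AF$ in the sense of \eqref{weak}. Because $\ve\AF$ is elliptic of order $2$ with principal symbol bounded below by $c_\ve|\x|^2$ while $\RF$ is only of order $\le 1$, the operator $\ve\AF-\RF$ has non-negative principal symbol outside a compact subset of the phase space, hence at most finitely many negative eigenvalues, so $n(-t,\ve\AF-\RF)=O(1)=o(t^{-\g})$. Lemma~\ref{Lem.perturb} (applied with $\WF:=\RF$) then gives
$$
\limsup_{t\to 0}t^\g n(-t,\AF-\VF-\RF)\;\le\;\limsup_{t\to 0}t^\g n(-t,\AF-\VF),
$$
and analogously for $\liminf$, which combined with the min-max bound above yields the desired inequalities. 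The main technical obstacle is the sharp G\aa rding step: the remainder $\RF$ must be produced with the correct decay in $x$, since the symbol class underlying Theorem~\ref{Gen.Thm.} is anisotropic (order $2$ in $\x$ with coefficients decaying polynomially in $x$); this likely requires pairing a dyadic decomposition in $x$ with the standard Weyl sharp G\aa rding inequality, so that the weakness of $\RF$ relative to $\AF$ is truly controlled.
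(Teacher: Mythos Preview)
The paper's argument is considerably shorter and does not pass through a G\aa rding remainder or Lemma~\ref{Lem.perturb}. After the same first step you describe (modifying the symbols outside the $\d$-neighborhood so that $\aF\le\bF$ globally, which by Lemma~\ref{localization} leaves the asymptotics unchanged), the paper simply invokes the principle that a Weyl operator with nonnegative symbol is a nonnegative operator; hence $\BF-\AF=OP_W(\bF-\aF)\ge 0$, so $\BF-\VF\ge\AF-\VF$, and the variational principle gives \eqref{ineq} at once.

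Your detour through sharp G\aa rding is in spirit more careful (the implication ``$p\ge 0 \Rightarrow OP_W(p)\ge 0$'' is not literally true in full generality), but the execution has a genuine gap. In this lemma $\AF,\BF$ are the \emph{zero-order} operators coming from Section~\ref{Sect.Transf}: their symbols are homogeneous of degree $0$ in $\x$ and vanish like $|x|^2$ at $x=0$. Your claims that ``$p\ge 0$ is of order $2$ in $\x$'' and that ``$\ve\AF$ is elliptic of order $2$ with principal symbol $\ge c_\ve|\x|^2$'' are therefore wrong, and with them collapses the conclusion that $\ve\AF-\RF$ has only finitely many negative eigenvalues. With the correct orders, the G\aa rding (resp.\ Fefferman--Phong) remainder $\RF$ is of order $-1$ (resp.\ $-2$) in $\x$, while $\ve\AF$ is merely bounded and degenerates at $x=0$; establishing $n(-t,\ve\AF-\RF)=o(t^{-\g})$ in that situation is precisely the content of Proposition~\ref{Prop.Est}, which however requires the extra hypothesis that the order $-1$ symbol vanish at $x=0$ --- a property a generic G\aa rding remainder does not possess.
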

\begin{proof}First, we can change the symbols of operators $\AF,{\BF}$ outside a small neighborhood of zero such that the inequality  between symbols holds everywhere in $\R^d$. This transformation, by Lemma \ref{localization} does not change the asymptotics of negative eigenvalues. After this, \eqref{ineq} follows from the variational principle, since operators with positive Weyl symbol are positive.
\end{proof}

\subsection{Freezing the subsymbol}\label{Sect.lower order} In this section we discuss how the eigenvalue problem for a general zero order pseudodifferential operator can be reduced to a problem of a special form.

%We suppose that at the point $x^0\in\Mb,$ the eigenvalue $\mb_1$ of the principal symbol $\aF$ of the pseudodifferential operator $\AF$ has its largest value $\bm_1(x^0,\x)=1,$ for all $\x,$ while all other eigenvalues $\m_\i$, $\i>1,$ are smaller, $\m_i(x,\x)< \rb<1$.

We recall that the asymptotic distribution of eigenvalues of $\AF$ above $1$ does not depend on values of symbol of the operator outside an arbitrary small neighborhood of $x^0.$ Using this fundamental property, we perform a series of transformation of our problem. In all these transformations we do not change the symbol $\aF$ for $x\in\Uc,$ therefore the asymptotics of eigenvalues the eigenvalues approaching the tip of the essential spectrum does not change.

%Next, for the sake of convenience of reasoning, we pass to considering, instead of the above $\AF$, the operator $\AF'=1-\AF,$ in $\R^d.$ For the operator $\AF',$ the smallest eigenvalue of the principal symbol $\aF'=1-\aF$ has minimal value at $x=0,$ while the complete symbol equals $1$ outside a neighborhood of $0$.  Thus, zero is the lowest point of the essential spectrum and our problem is reduced to studying the asymptotics of the negative eigenvalues of $\AF'.$ From now on, we replace the notation $\AF'$ by the old one, $\AF,$ with corresponding notation change for the symbol, its components and its eigenvalues. So, the smallest eigenvalue $\bm_1(x,\x)$ has  a nondegenerate and unique minimum at $x=0,$ $\bm_1(x,\x)=\Qc(x,\x)+O(|x|^3), \, x\to 0,$ and all other eigenvalues are larger than $\rb>0.$

Here we have fixed a global Euclidean co-ordinates system in $\R^d.$  Since now, all $\x$-homogeneous terms in the polyhomogeneous symbol of $\aF_1(x,\x)$ and further operators under consideration, are defined globally.

In these co-ordinates, we consider the subsymbol $\bF(x,\x)$. It is smooth in $x$ and, outside a neighborhood of $\x=0$ positively homogeneous of order $-1$ in $\x$.  We freeze the symbol at the point $x=0$ and consider the splitting $\bF(x,\x)=\bF(x_0,\x)+\cF(x,\x),$ where $\cF(x_0,\x)=0.$ Thus, $\cF(x,\x)=|x-x_0|\dF(x_0,\x)$ with a bounded function $\dF$.

The following proposition provides us with the possibility to freeze the subsymbol of the operator at the point $0,$ not changing the asymptotics of negative eigenvalues.
\begin{proposition}\label{Prop.Est}Let $\AF$ be a zero order pseudodifferential operator with Weyl symbol $\aF_W(x,\x)\ge p|x|^2, \, p>0$ for $|x|<\d$ and $\aF_W(x,\x)\ge 1,$ $|x|>\d$. Let $\CF$ be an order $-1$ operator with principal symbol  $\cF(x,\x)$, which is a symbol of order $-1,$ moreover, $\cF(0,\x)=0$.
Then for the eigenvalues of operator $\AF(x,D)-\CF(x,D)\equiv \AF-\CF$ the estimate holds
\begin{equation}\label{sub.est.1}
    n(-t, \AF(x,D)-\CF(x,D))=o(t^{-\g}), \, \mbox{\emph{for\, any}}\,    \g >0.
\end{equation}
\end{proposition}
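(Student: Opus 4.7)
The plan is to combine the Birman--Schwinger principle with a Schatten-class estimate, the key mechanism being that the Taylor vanishing $\cF(0,\x)=0$, paired with the quadratic well $\aF_W(x,\x)\ge p|x|^2$ near the origin, forces the Birman--Schwinger kernel to be arbitrarily thin in Schatten scales.

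First I would reduce to a positive kinetic operator: since $\aF_W\ge 0$, sharp G{\aa}rding gives $\AF\ge -C_0$, and subtracting the finite-rank projector onto the negative spectral subspace of $\AF$ (a modification that does not affect the asymptotics) one may assume $\AF\ge 0$. The self-adjointness of $\CF$, the Birman--Schwinger principle, and the Markov-type inequality $\#\{\mu_j>1\}\le\sum_j|\mu_j|^p$ then yield
\begin{equation*}
n(-t,\AF-\CF)\le \#\bigl\{\mu_j(\BF_t)>1\bigr\}\le \|\BF_t\|_{S^p}^{p},\qquad \BF_t:=(\AF+t)^{-1/2}\CF(\AF+t)^{-1/2},
\end{equation*}
for every $p>0$.

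Next I would extract a symbol estimate. From $\cF(0,\x)=0$, Taylor expansion gives $\cF(x,\x)=\sum_j x_j\cF_j(x,\x)$ with smooth $\cF_j$ of order $-1$, so $|\cF(x,\x)|\le C|x|\langle\x\rangle^{-1}$ on the compact $x$-support inherited from the localization of Sect.~\ref{Sect.Transf}. Combined with $\aF_W+t\ge p|x|^2+t$, the leading Weyl symbol of $\BF_t$ obeys
\begin{equation*}
\bigl|\sigma_W(\BF_t)(x,\x)\bigr|\;\lesssim\;\frac{|x|}{\langle\x\rangle\,(p|x|^2+t)};
\end{equation*}
lower-order terms in the Moyal expansion are of strictly lower pseudodifferential order and give subordinate contributions below. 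The Birman--Solomyak inequality $\|OP_W(\s)\|_{S^p}\le C_p\|\s\|_{L^p(\R^{2d})}$ for $p\ge 2$ (interpolation of the Hilbert--Schmidt identity with Calder\'on--Vaillancourt) then yields
\begin{equation*}
\|\BF_t\|_{S^p}^{p}\;\lesssim\;\int_{\R^d}\langle\x\rangle^{-p}\,d\x\int_{|x|\le R}\frac{|x|^p\,dx}{(p|x|^2+t)^{p}}.
\end{equation*}
For $p>d$ the $\x$-integral converges, and the substitution $r=|x|\sqrt{p/t}$ reduces the $x$-integral to $t^{(d-p)/2}\int_0^{R\sqrt{p/t}} s^{d+p-1}(s^2+1)^{-p}\,ds$, whose integrand is integrable at infinity precisely when $p>d$. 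Thus $\|\BF_t\|_{S^p}^p\le C_p'\,t^{(d-p)/2}$. For any prescribed $\g>0$, choosing $p\in(d,d+2\g)$ makes $(d-p)/2\in(-\g,0)$, and hence $n(-t,\AF-\CF)\le C_p'\,t^{(d-p)/2}=o(t^{-\g})$ as $t\to 0$.

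The main obstacle will be the rigorous symbolic calculus for $(\AF+t)^{-1/2}$: its symbol concentrates in a well of radius $\sim\sqrt{t}$ around $x=0$, the critical scale at which the standard $S^m_{1,0}$ calculus degenerates. The clean resolution is a Beals-type (Weyl--H\"ormander) calculus adapted to the metric $|dx|^2/(|x|^2+t)+|d\x|^2/\langle\x\rangle^2$, in which all residual Moyal-product terms inherit the same $t$-scaling as the principal one, so the integral estimate above survives.
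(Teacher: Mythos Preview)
Your strategy---Birman--Schwinger plus a Schatten bound on $(\AF+t)^{-1/2}\CF(\AF+t)^{-1/2}$---is natural, but as you yourself flag, the crux is the symbolic calculus for $(\AF+t)^{-1/2}$ as $t\to 0$. This is a genuine gap, not a technicality: the first subprincipal term in the Moyal expansion already involves $\partial_x\aF_W/(\aF_W+t)^{3/2}\sim |x|/(|x|^2+t)^{3/2}$, which at the critical scale $|x|\sim\sqrt t$ is of size $t^{-1}$, \emph{larger} than the principal term $t^{-1/2}$. So ``lower-order terms \dots\ give subordinate contributions'' is not automatic. Your proposed Weyl--H\"ormander metric may well tame this, but you would still need to verify temperance and slow variation uniformly in $t$, produce a parametrix for $(\AF+t)^{1/2}$ in that class, and check that the $L^p\to S^p$ bound you quote holds for symbols in that nonstandard class. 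None of this is done, and it is considerably more work than the rest of the argument. (A smaller issue: the negative spectral subspace of $\AF$ need not be finite-rank, since $0$ lies in $\sme(\AF)$; you would instead work with $\AF+C_0$ and absorb the shift.)

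The paper's proof bypasses the parametrized calculus entirely. Since $\aF_W(x,\x)\ge \mF(x)$ for a scalar function $\mF(x)\asymp |x|^2$ near $0$, one replaces $\AF$ by the multiplication operator $\MF=\mF(x)$ and bounds $n(-t,\AF-\CF)\le n(-t,\MF-\CF)$. The key trick is then the elementary Young inequality
\[
t+\mF(x)\;\ge\; c\,t^{1/p}|x|^{2/q},\qquad p^{-1}+q^{-1}=1,
\]
which decouples $t$ from $x$ in the variational inequality $(\CF u,u)\ge\int(t+\mF)|u|^2$. After the substitution $v=|x|^{1/q}u$, one is left with the counting function $n(t^{1/p},\Zb)$ for the \emph{$t$-independent} operator $\Zb=|x|^{-1/q}\CF\,|x|^{-1/q}$. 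Because $\cF(0,\x)=0$ gives a kernel dominated by $|x|\,|x-y|^{1-d}$, $\Zb$ is an integral operator with kernel $\lesssim |x|^{1-1/q}|x-y|^{1-d}|y|^{-1/q}$, and the classical Birman--Solomyak weighted-convolution estimates yield $s_n(\Zb)=O(n^{-1/d})$, hence $n(-t,\AF-\CF)=O(t^{-1/(pd)})$ with $p$ arbitrarily large. The whole argument lives in fixed symbol classes; no parametrized calculus is needed.
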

\begin{proof} The proof follows the structure of the one of Theorem 4.3 in \cite{MiRo}. Let $\mF(x)$ be a smooth nonnegative function having zero of order 2 at $0$ and such that $\aF(x,\x)\ge \MF(x)$, where $\MF(x)$ is $\mF(x)$ times the unit matrix, $\mF(x)\to 1$ at infinity.  Therefore, $n(-t, \AF(x,D)-\CF(x,D))\le n(-t,\MF(x)-\CF ).$ By the variational principle, we are interested in the study of the quantity
\begin{equation}\label{sub.est.2}
    n(-t,\MF(x)-\CF )=\max\dim\{\Lc: ((\MF(x)-\CF)u,u)<-t\|u\|^2 , \, u\in \Lc\setminus\{0\}\}
\end{equation}
as $t\to +0$ (here and further on, the subspaces are considered in $L_2(\R^d)$.)

We write the inequality in \eqref{sub.est.2} as
\begin{equation*}
    \int \mF(x)|u|^2 dx- (\CF u,u)< t\int |u(x)|^2 dx,
\end{equation*}
or
\begin{equation}\label{sub.est.3}
    (\CF u,u)\ge \int (t+\mF(x))|u(x)|^2 dx.
\end{equation}
In the classical inequality $AB\le \frac{A^p}{p}+\frac{B^q}{q}$
for positive $A,B$ and $p^{-1}+q^{-1}=1$, we set $A=t^{\frac1p},B=|x|^{\frac2q}$, with $p,q$ to be fixed later.
Thus, we have:
\begin{equation}\label{sub.est.4}
    \int(t+\mF(x))|u(x)|^2dx \ge c t^{\frac1p}\int|x|^{\frac2q}|u(x)|^2 dx
\end{equation}
If we replace the right-hand side in \eqref{sub.est.3} by a smaller quantity, namely,
 by the right-hand side in \eqref{sub.est.4}, then the maximal dimension of subspaces where the resulting inequality
holds can only increase. Therefore, \eqref{sub.est.2}, \eqref{sub.est.3}, \eqref{sub.est.4} imply an upper estimate for the counting functions of eigenvalues we are interested in:
\begin{equation}\label{sub.est.5}
    n(-t,\MF(x)-\CF )\le \max\dim\{\Lc: (\CF u,u)>t^{\frac1p}\int|x|^{\frac1q}|u(x)|^2dx, \, u\in\Lc\setminus\{0\}\}.
\end{equation}
Next, denote $|x|^{\frac1q}u$ as $v$ in \eqref{sub.est.5}; this gives

 \begin{gather}\label{sub.est.6}
 n(-t,\MF(x)-\CF )\le \\\nonumber \max\dim\{\Lc:(|x|^{-\frac1q}(\CF(|x|^{-\frac1q}v),v)> ct^{\frac1p}\int|v(x)|^2dx,\, v\in\Lc\setminus\{0\}\}.
 \end{gather}
The quantity on the right-hand side in \eqref{sub.est.6} is nothing but the singular numbers distribution function $n(t^{\frac1p},\Zb)$, where $\Zb$ is the operator  $\Zb=|x|^{-\frac1q}\CF|x|^{-\frac1q}.$ Recall that $\CF$ is the order $-1$ pseudodifferential operator with principal symbol $\cF(x,\x)$ which vanishes for $x=0$. Therefore, up to terms of lower order (which can be easily shown to to make a weaker contribution to the eigenvalues estimates),
$\CF$ is an integral operator with kernel  majorated by $C|x||x-y|^{1-d}.$ In this way, we are left with studying  singular number   estimates for the integral operator with kernel $\kF(x,y)$ satisfying
\begin{equation}\label{sub.est.7}
    |\kF(x,y)|\le C |x|^{1-\frac1q}|x-y|^{1-d}|x|^{-\frac1q}.
\end{equation}
Such operators fit into the general setting of the fundamental paper \cite{BS.Int}, namely Theorem 10.3 there.

We cite here the conditions of this theorem in \cite{BS.Int} as applied to our case.
Let the function $F(x),$ $x\in\O\subset\R^d$ be positively homogeneous of order $\kb=1-d,$ (the condition $-d<\kb<0$ is fulfilled) and continuous in the angle variable. Let the weight functions $a(x), b(x)$ belong to $L_{\l_1}, L_{\l_2},$
where
\begin{equation}\label{sub.est.8}
    \l_1^{-1}+\l_2^{-1}=\d^{-1}\equiv 1+\frac{\kb}{d}=d^{-1};
\end{equation}
supposing $\d^{-1}\le \l_j\le\infty.$
Then, by Theorem 10.3, (a), for the integral operator $\Fb_{a,b}$ with kernel $a(x)F(x-y)b(y),$
\begin{equation}\label{sub.est.9}
    s_n(\Fb_{a,b})\le C(F) n^{-\frac1\d}\|a\|_{L_{\l_1}}\|b\|_{\l_2},
\end{equation}

Being applied to our case, with $a(x)=C|x|^{1-\frac1q}\in L_\infty$, $b(x)= C|x|^{-\frac1q},$
where $\l_1=\infty$ and $\l_2=d$, the conditions of theorem are fulfilled when $|x|^{-\frac1q}\in L_{d}$ which holds for any $q>1.$ With  such $q$ fixed, we have estimate \eqref{sub.est.9} which can be rewritten as
$n(s, \Fb_{a,b})=O(s^{-\frac1d})$. We set $s=t^{\frac1p}$, which, by \eqref{sub.est.6}, gives the required estimate
\begin{equation}\label{sub.est.10}
     n(-t,\MF(x)-\CF )=O(t^{-\frac{1}{pd}}),
\end{equation}
and this, by the arbitrariness of $p<\infty$, gives the required estimate $n(-t,\MF(x)-\CF )=o(t^{-\vp})$ for an arbitrary $\vp>0$.

We apply Lemma \ref{Lem.perturb}, which gives the required equality.
\end{proof}
The proposition we have just proved has the following consequences.

First, the power asymptotics of $n(-t,\AF)$ as $t\to 0$ depends only on the value of the subprincipal symbol of $\AF$ at the point $x=0,$ so, we can replace this symbol by the one frozen at $x=0.$ In fact, when we make such change, we, actually, add a pseudodifferential operator of order $-1$ with symbol vanishing at $x=0$, and this change, by Proposition \ref{Prop.Est},
adds operator with fast decaying eigenvalues.
On the next step we make the homotopy of the symbol in the following way.
Continue the  symbol $\aF$ of operator $\AF_1$ from $x\in \Uc$ to $x\in \R^d,$  obtaining the symbol $\aF_2(x,\x),$ so that its principal term is smooth and zero order homogeneous in $x$ outside some neighborhood $\Uc_2\supset \Uc$ and still all eigenvalues of the principal symbol are smaller than $1-\rb.$  It is possible to perform a homotopy from $\aF_1(x,\x)$ to $\aF_2(x,\x),$ again, this homotopy not touching the symbol in $\Uc.$

Next, we note here that the value of symbol of order $-1$ at the point $x^0=0$ for our operators is the same for the Weyl quantization  and  the left one, due to relation \eqref{W-l relation}. In fact, these symbols differ by the combination of first order  $x$-derivative of the principal symbol at this point - but under our conditions these latter derivatives vanish. This means that in eigenvalue asymptotics calculations we may arbitrarily replace the Weyl  quantization by the left one and vice versa, and this does not change the form of the asymptotic coefficient.

\section{Eigenvalue asymptotics}
\subsection{The scalar case}\label{Sect.Scalar}

\emph{Proof of Theorem 1.} In the study of eigenvalue asymptotics of our operator, we consider the scalar case first, $\Nb=1$. For a given $\e,$ we consider a neighborhood of $x=0$, where
\begin{equation}\label{between}
    (1-\e)\Qb(x,\x)\le \aF_0(x,\x)\le (1+\e)\Qb(x,\x),\, x\in\Uc
\end{equation}
where $\Qb(x,\x)$ is the (scalar-valued) quadratic form \eqref{m_1}. This is possible, due to the condition \eqref{m_1}, by means of choosing a sufficiently small neighborhood $\Uc.$ We will  prove the eigenvalue asymptotics for operators
$(1\pm\e)\Q-\BF(x,\D)$ in the Weyl quantization, $\QF=\Qb_W(x,D_x)$, $\BF=\bF_W(x,D_x)$ and then use the arbitrariness of $\e$
to justify the asymptotics for $\AF-\BF$ using Lemma \ref{Lem.monoton}.

We recall that  the negative eigenvalue asymptotics does not change if we replace the subsymbol $\bF(x,\x)$ by its frozen value $\bF(0,\x).$

Consider the symbol $\cF(D)=\bF(x,D)-\bF(0,D).$
This symbol vanishes for $x=0.$ By Proposition \ref{Prop.Est}, operator $\ve \QF-\CF$ has negative eigenvalues with very fast convergence to zero, so that $n (-t, \ve \QF-\CF)=o(t^{-\vp})$ for any $\vp>0.$ By Lemma \ref{pert.estimates}, it follows that the eigenvalue asymptotics for $(1\pm\ve)\QF-\BF$ is the same as for $(1\pm\ve)\QF-\BF_0$, where $\BF_0$ is the (Weyl) pseudodifferential operator with symbol $\bF(0,\x)$ not depending on $x.$

Now let $\Fc$ be the Fourier transform in $\R^d.$
We consider operators $\LF_{\pm}=\Fc^{-1}((1\pm\ve)\QF-\BF_0)\Fc,$ unitarily equivalent to $(1\pm\ve)\QF-\BF_0)$.
Up to weaker terms we have
\begin{equation}\label{TransformedScalar}
    \LF_{\pm}=(1\pm\ve)\QF-\bF(0,x).
\end{equation}
Here $(1\pm\ve)\QF(D,x)$ is a second  order elliptic  differential operator with coefficients depending on the parameter $x$, zero order positively homogeneous in $x,$ with a smoothing near $x=0.$ In its turn, $\bF(0,x),$  is the operator of multiplication by a function $\bF(0,x)$ which decays as $h(\s)|x|^{-1}$ as $x\to \infty,$ with a smooth function $h(\s),\, \s=\frac{x}{|x|}.$
Thus, we are in the conditions of Theorem \ref{Gen.Thm.}, with $a_2(x,\x)=(1\pm\ve)\QF(\x,x)$ and $a_0(x)=\bF(0,x),$ therefore, the asymptotics of negative eigenvalues  of the operator  $(1\pm\ve)\QF(D,x)-\bF(0,x)$ is given by the formula \eqref{As.Gen.Thm.}.  The same asymptotics holds for the pre-Fourier operator $(1\pm\ve)\QF(x,D)-\bF(0,D).$ The asymptotic coefficients $\Cb$ for operators with $\pm\ve$ converge as $\ve\to 0$ to such coefficient with $\ve=0.$ Therefore, by Lemma \ref{Lem.monoton}, we have the same eigenvalue asymptotics for the operator $\aF_0(x,D)-\aF_{-1}(0,D),$ with the symbol of order $-1$ frozen at $x=0.$ Now recall that the difference $\aF_{-1}(x,D)-\aF_{-1}(0,D)$  gives a weak contribution to the negative eigenvalue asymptotics by Lemma \ref{Lem.perturb} and Proposition \ref{Prop.Est}. Finally, by  our localization considerations in Sect.4, this asymptotics holds for the initial operator $\AF.$

\subsection{The vector case}
 \emph{Proof of Theorem 2}. We consider the operator in the Euclidean space after the transformations described in Sect.\ref{Sect.Transf}. So, the lowest eigenvalue $\bm_1(x,\x)$ of the principal symbol $\aF_0(x,\x)$ has a unique and nondegenerate minimum at $x=0$ for all $\x,$ $\bm_1(0,\x)=0,$ and the complete Weyl symbol equals $(\pmb{\pmb{1}})_{\Nb\times\Nb}$ for $|x|>R.$

Using the localization property, Lemma \ref{localization}, we will change $\aF(x,\x)$ outside a neighborhood of $x=0,$ which enables further reduction to the scalar case. Let $\pb_1(x,\x)$ be the eigenprojection in $\C^\Nb$ of the symbol $\aF_0(x,\x)$ corresponding to the eigenvalue $\bm_1(x,\x).$ This eigenprojection is defined and depends smoothly on $(x,\x)$ for $x$ in a neighborhood $B_r=\{x: |x|\le r\}$ of the point $x=0,$  due to our assumption that the eigenvalue is simple in such neighborhood. We also know that $\pb_1(x,\x)$ is zero order positively homogeneous in $\x$ variable. We extend this projection-valued function as $\tilde{\pb}_1(x,\x)$ outside the ball $B_r$, so that it is smooth for all $x\in\R^d,$ has bounded derivatives in $x$ of all orders, has limit values as $|x|\to\infty$ in each direction $\frac{x}{|x|}=\s$ and positively zero order homogeneous in $\x$.

Next, consider the projection-valued function $\pb'(x,\x)= (\pmb{\pmb{1}})_{\Nb\times\Nb}-\tilde{\pb}_1(x,\x).$ For $x\in B_r,$ $\pb'(x,\x)$ is the spectral projection of $\aF_0(x,\x)$ corresponding to the eigenvalues different from $\bm_1(x,\x).$ Let $\h(s),$ $0\le s>\infty,$ be a smooth cut-off function, $0\le\h(s)\le 1 $ such that $\h(s)=1$ for $s\le r/2$ and $\h(s)=0$  for $s\ge r.$ We set \begin{equation}\label{final.eigenvalue}
    \tilde{\bm}_1(x,\x)=\h(|x|)\bm_1(x,\x)+(1-\h(|x|)),
\end{equation}
 Further on, we set $\widetilde{\bm'}(x,\x) =2r.$ Finally we set
\begin{equation}\label{final matrix}
    \tilde{\aF}(x,\x)=\tilde{\bm}_1(x,\x)\tilde{\pb}_1(x,\x)+\widetilde{\bm'}(x,\x)\pb'(x,\x).
\end{equation}
This matrix-valued function on ${T}^*\R^d$ possesses, by its construction, the following properties. For its zero order term,  $\tilde{\aF}_0(x,\x),$ lowest eigenvalue coincides with the lowest eigenvalue of $\aF_0(x,\x)$ for $x$ in a neighborhood $B_r$ of the origin; the corresponding spectral projection  of $\tilde{\aF}(x,\x)$ coincides with the spectral projection of $\aF(x,\x)$ in this neighborhood. Outside the ball, the lowest eigenvalue of $\tilde{\aF}(x,\x)$ is not greater than $1.$ The remaining eigenvalues of $\tilde{\aF}(x,\x)$ equal $2.$
As a whole, the symbol $\tilde{\aF}(x,\x)$ stabilizes in $x$ at infinity.   Therefore, the essential spectrum of the pseudodifferential operator $\widetilde{\AF},$ corresponding to the eigenvalue $\tilde{\bm}_1(x,\x)$ is separated  from the essential spectrum corresponding to other eigenvalues of the principal symbol. We can choose a closed interval $\Jc_1$ containing the essential spectrum of $\AF$ corresponding to $\tilde{\bm}_1(x,\x)$ and not touching other parts of the essential spectrum.

%Now we apply the following important result obtained in \cite{KRSV}. If the essential spectrum of a zero order self-adjoint pseudodifferential operator splits into two  disjoint closed sets, $\sme(\AF)=E_1\cup E_2$, then the Hilbert space $\Hbb$ splits into the direct sum $\Hbb=\Hbb_1\oplus\Hbb_2$ and, up to a term $\RF$ of an arbitrary negative order, there exist a pseudodifferential unitary operator $\UF$ such that the operator $\AF$ splits
%\begin{equation}\label{split.sys}
 %   \UF^*\AF\UF=\AF_1\oplus\AF_2+\RF
%\end{equation}
%where $\AF_j$ acts in $\Hbb_j$ and the essential spectrum of $\AF_j$ coincides with $E_j$.

%We will need this fact only for the case when the order of $\RF$ is equal to $-2$; we also suppose that the sets $E_1,E_2$ lie in disjoint intervals.

Now  we are in the conditions of the construction in Section 2. Using Proposition \ref{Prop.Extract}, we find a unitary pseudodifferential operator $\TF$ such that, up to an operator of order $-2,$ the part of the operator $\AF$ corresponding to the spectrum  transforms by means of $\TF$ to a scalar pseudodifferential operator. To this latter operator we apply the result on the scalar operator and obtain the eigenvalue asymptotics formula.

\section{The NP operator in elasticity.} We apply our general results to the Neumann-Poincar\'{e} operator $\KF$ in 3D elasticity. This is an operator acting on $3$-component vector functions on the two-dimensional manifold $\Xb=\partial(\Dc)$, therefore $\Nb=3$, $d=2$. Since $d=2,$ the \emph{Condition A} in Sect.2 is satisfied. As explained in the Introduction, the principal symbol $\kF_0$ of the zero order pseudodifferential operator $\KF$ is given by the expression \eqref{princ}, where $\x=(\x_1,\x_2)$ are orthogonal co-ordinates in the tangent plane to $\Xb$ (identified naturally with the cotangent plane since $\Xb$ is embedded in $\R^3$.) The symbol $\kF_0(\x)$ is represented by the matrix \eqref{princ} in the frame where two axes are directed along the $\x_1,\x_2$ axes in the tangent plane and the third axis is directed along the exterior normal  to $\Xb$.  The eigenvalues of the principal symbol equal $\bm_1(x,\x)=-\km(x),$ $\bm_2(x,\x)=0,$ $\bm_3(x,\x)=\km(x).$ We see that the eigenvalue $\bm_2(x,\x)$ is constant, it does not depend on the point $x\in\Xb$ and on $\x\in \R^d$ and thus the point zero is an isolated point of the essential spectrum of $\KF,$ independently of the geometry of $\Xb$ or the material parameters.  Therefore, the results of the present paper are not applicable to the study of eigenvalues converging to this point; the results \cite{RPc} or \cite{R.NP.} are not applicable either, since the latter papers consider the case when the whole essential spectrum consists of isolated points. Thus, an additional analysis is needed, and the corresponding results will be presented in a later publication.
 
In the opposite, the analysis of the discrete spectrum near other tips of the essential spectrum fit in the scope of the present paper.

 Suppose that the function $\km(x)$ has its nondegenerate maximal value $1$ at the point $x^0\in \Xb.$ Following our general approach, we consider the operator $1-\KF,$ for which the principal symbol $\pmb{\pmb{1}}-\kF,$ where $\pmb{\pmb{1}}$ is the $3\times 3$ unit matrix.

The eigenvector $\eb_1(x,\x)$ corresponding to the eigenvalue $\bm_1(x)$ equals
\begin{equation}\label{eb1}
    \eb_1(x,\x)=\frac{1}{\sqrt{2}}(-i\vs,1)^\top, \vs=|\x|^{-1}(\x_1,\x_2)\in S^1.
\end{equation}
This is a smooth local  eigenvector branch for the principal symbol over $S^*\Xb$ near $\x^0$, the existence of which is declared by \emph{Condition A}. Similarly,
\begin{equation}\label{eb2}
    \eb_2(x,\x)=(\vs_2,\vs_1,0)^\top,\, \eb_3(x,\x)=\frac{1}{\sqrt{2}}(i\vs,1)^\top
\end{equation}
are smooth local  eigenvector branches corresponding to the eigenvalues $\bm_2(x,\x),\,\bm_3(x,\x)$

By the construction in \cite{Capo.Diag}, there is a unitary operator $\TF$ transforming $\AF$ to a diagonal form. The principal symbol of this operator can be chosen as the matrix composed of the column vectors $\eb_\i(x,\x),$ $\i=1,2,3.$ The procedure in \cite{Capo.Diag}, see also \cite{CapoVas} describes the procedure of the construction of the consequent terms in the symbol of $\TF,$ we, however, do not need them.

The subprincipal symbol of the NP operator was calculated in \cite{R.NP.}.  To express this symbol at the point $x^0$ where the function $\km(x)$ has nondegenerate maximum, a special co-ordinate system is chosen, namely, with  $\x_1,\x_2$ directed along the  principal curvature directions of the surface $\Xb$ (if $x^0$ is an umbilical point, these directions are chosen arbitrarily); for definiteness, these directions are chosen to form a right co-ordinate system.

In such co-ordinates and the corresponding frame, it is found in \cite{R.NP.} that the subprincipal symbol is a linear combination of the principal curvatures $\k_1(x^0),$ $\k_2(x_0)$ of $\Xb,$ with coefficients being universal (not depending on the geometry of $\Xb$) order $-1$ symbols,  $3\times 3$ matrices with entries depending linear fractionally  on the material constants.
(We do not copy the explicit expressions here.) 

Applying our general results, we arrive at the asymptotics of eigenvalues of the NP operator in 3D elasticity. Namely, the counting function of the eigenvalues approaching the nondegenerate maximum point $x^0$ of the coefficient $\km(x)$ at the boundary, $\km(x^0)=1,$ has the order $n(1+t, \KF)\sim C t^{-1}$ (here $\frac{d}{2}=1$), where the coefficient $C$ depends on the principal curvatures of $\Xb$ at the point $x^0$ and material constants at this point.

When determining the asymptotics of eigenvalues converging to other extremal points of $\km$ and $-\km,$ one can make the linear fractional transform of the operator $\KF$ (as it was done, e.g., in \cite{MiRo},) so that this value becomes the maximal value of the principal symbol of the transformed operator.

In comparison with the case of a homogeneous material, investigated in \cite{MiRo}, \cite{R.NP.}, where the asymptotics under discussion was found to have the order $t^{-2}$,  we have, for the nonhomogeneous material, a faster eigenvalue convergence to the tip of the essential spectrum.

\end{document}